\theoremstyle{definition}
\newcommand{\id}{\mathrm d}
\newcommand{\vc}{\bm}
\newcommand{\bnabla}{\pmb\nabla}
\renewcommand{\tilde}{\widetilde}
\renewcommand{\Pr}{\mathbb P}
\newcommand{\bsigma}{\vc \sigma}
\DeclareMathAlphabet\mathbfcal{OMS}{cmsy}{b}{n}
\newtheorem{thm}{Theorem}
\newtheorem{rem}{Remark}
\newtheorem{defn}{Definition}
\newtheorem{cor}{Corollary}
\begin{document}
\title{Mitigation of tipping point transitions by time-delay feedback control}
\author{Mohammad Farazmand}
\email{farazmand@ncsu.edu}
\affiliation{Department of Mathematics, North Carolina State University,
	2311 Stinson Drive, Raleigh, NC 27695-8205, USA}

\begin{abstract}
In stochastic multistable systems driven by the gradient of a potential, transitions between equilibria 
is possible because of noise. We study the ability of linear delay feedback control to 
mitigate these transitions, ensuring that the system stays near a desirable equilibrium. For small delays, we show that the 
control term has two effects: i) a stabilizing effect by deepening the potential well around the desirable equilibrium, and ii) a destabilizing effect by intensifying the noise by a factor of $(1-\tau\alpha)^{-1/2}$, 
where $\tau$ and $\alpha$ denote the delay and the control gain, respectively.
As a result, successful mitigation depends on the competition between these two factors. 
We also derive analytical results that elucidate the choice of the appropriate control gain and delay that ensure successful mitigations. These results eliminate the need for any Monte Carlo simulations 
of the stochastic differential equations, and therefore significantly reduce the computational
cost of determining the suitable control parameters. We demonstrate the application of our results on two examples. 
\end{abstract}

\maketitle

\begin{quotation}
Many complex systems, such as the climate, stock markets, 
population of species, and the nervous system, have multiple stable equilibrium states. 
Near the tipping point of the system, 
small amounts of stochastic disturbances can lead to transitions between 
these equilibria. Typically, one of the equilibria 
is desirable and transitions away from it lead to catastrophic consequences. 
Here, we analyze the ability of a delay feedback control 
to mitigate transitions away from a desirable equilibrium.  
\end{quotation}

\section{Introduction}
Tipping point transitions refer to abrupt changes in the state of 
a dynamical system caused by a rapid transition from an equilibrium state
of the system to another equilibrium. 
Such transitions have been observed in many systems
such as climate dynamics~\cite{lenton2011,dijkstra2019}, 
epilepsy~\cite{McSharry2003, meisel2015, Wilkat2019}, 
population ecology~\cite{drake2010,Dai2012,Botero2015},
fluid dynamics~\cite{Farazmande1701533, blonigan2019},
and stock markets~\cite{Longin1996,sornette2012,jurczyk2017}. 
Typically, one of the system equilibria 
is desirable and transitions away from it are catastrophic. 
As such, it is necessary to devise control methods that mitigate 
such transitions. 

Tipping point transitions are usually studied in the context of 
bifurcation theory, where the system parameters
change gradually. Under certain parameter variations, a bifurcation 
takes place whereby an equilibrium loses stability and the system 
evolves towards another stable equilibrium.

However, even before such bifurcations take place, the system can 
switch equilibria due to small amounts of stochastic disturbances. 
Such stochasticity-induced transitions are common in multistable 
systems, where several stable equilibria exist simultaneously. 
These transitions become more feasible near the tipping point of the system (i.e., close to the bifurcation).

Here, we focus on transitions in stochastic multistable systems.
These systems are generally modeled by the stochastic differential equation (SDE), 
\begin{equation}
\id\vc X = \vc f(\vc X)\id t + \bsigma(\vc X)\id \vc W,
\label{eq:ode_general}
\end{equation}
where $\vc X(t)$ denotes the state of the system at time $t$, 
$\vc f:\mathbb R^n\to\mathbb R^n$ is the driving force, $\vc W(t)$ is the 
Brownian motion (or noise), and $\bsigma(\vc X)$ is the diffusion tensor determining
the intensity of the noise. The system is multistable if, in the absence of noise, there are several 
stable fixed points. 

Much attention has been paid to real-time prediction~\cite{scheffer2008, scheffer2009, scheffer2010,Scheffer2012,farazmand2019a} 
and quantification~\cite{pratt1986, vanden2006,Zhou2012} of transitions in system~\eqref{eq:ode_general}. 
Real-time prediction refers to early warning signs of the upcoming transitions that enable their predictions. 
Quantification, on the other hand, refers to estimating the probability of transitions in the aggregate,
and the most likely paths such transitions may take. 
However, mitigation of tipping point transitions has received relatively little attention. 
\begin{figure}
	\centering
	\includegraphics[width=.3\textwidth]{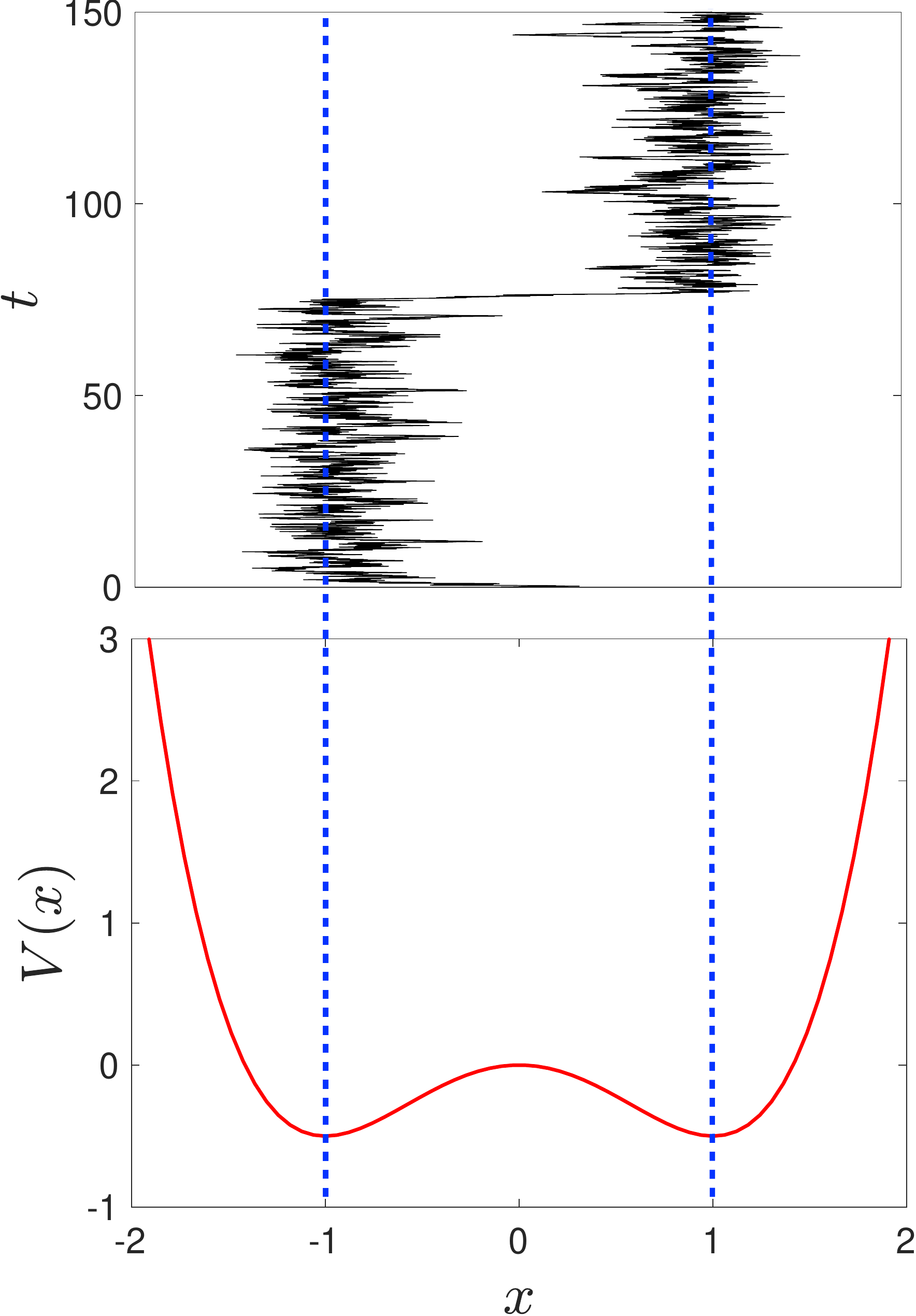}
	\caption{Example of abrupt transitions in a stochastic multistable system.
		Shown are a one-dimensional potential $V$ (bottom panel) and a trajectory $X(t)$ (top panel) of the 
		corresponding SDE~\eqref{eq:ode_general} with $\vc f = -\bnabla V$. The blue dashed lines
		mark the equilibrium states of the system. A detailed description of this system is given in Section~\ref{sec:DW_1d}.}
	\label{fig:trans_intro}
\end{figure}

Here, we analyze the ability of linear time-delay feedback control in mitigating 
transitions in stochastic multistable system. We consider the special case of 
a system driven by the gradient of a potential $V$, such that $\vc f(\vc x) = -\bnabla V(\vc x)$. 
In the absence of noise ($\bsigma=\vc 0$), all minima of the potential $V$ are stable fixed points.
However, small amounts of noise cause rare transitions between these stable equilibria
(see figure~\ref{fig:trans_intro}, for an example). 

Delay feedback control has been used widely in deterministic systems
to stabilize otherwise unstable fixed points or periodic orbits~\cite{ott1990, pyragas1992,Pyragas1995,Ahlborn2004}.
Recently, Suresh and Chandrasekar~\cite{Suresh2018} used delay feedback to mitigate extreme events 
in a deterministic system driven by a time-periodic force. However, little is known about the ability of 
delay feedback control to also mitigate critical transitions in stochastic multistable systems. 

Here, we discover several properties of such a control strategy. The controlled system is 
a stochastic delay differential equation (SDDE). As such, its mathematical analysis is 
extremely difficult due to the infinite-dimensional nature of the problem. For small delay, 
we derive an approximating SDE whose behavior mimics that of the controlled system. 
This approximation reveals several properties of the control. In particular, we show that 
the delay feedback has two simultaneous effects. First, the control term
modifies the potential $V$ into an effective potential $\tilde V$ which is deeper around the 
desirable equilibrium. As a result, this equilibrium becomes more stable, and 
transitions away from it become less likely. 

At the same time, the delay effectively enhances the noise intensity by 
a factor of $1/\sqrt{1-\alpha\tau}$, where $\alpha$ is the control gain and $\tau$ is the delay. 
This intensified noise increases the probability of transitions away from the desirable equilibrium.
The success of the delay feedback control depends on the interplay between these two
competing factors. Therefore, the appropriate choice of the gain and the delay becomes crucial.

We derive analytical results that inform this choice. In particular, we derive the exact stationary density 
of the approximating SDE, and an upper bound for the probability of transitions away from the desirable equilibrium. 
These two analytical results elucidate the proper choice of the control 
parameters at a low computational cost, and eliminate
any need for Monte Carlo simulations of the underlying SDDE (or its approximating SDE). 

This paper is organized as follows. 
In Section~\ref{sec:prelim}, we introduce the set-up of the problem. 
In Section~\ref{sec:approxSDE}, we derive the approximating SDE of the controlled system.
Stationary distribution of the approximating SDE is presented in Section~\ref{sec:p0}
and an upper bound for the probability of transitions is derived in Section~\ref{sec:prob_trans}. 
In Section~\ref{sec:numerics}, we demonstrate our results with two numerical examples. 
Finally, Section~\ref{sec:concl} contains our concluding remarks and future directions. 

\section{Preliminaries and set-up}\label{sec:prelim}
In this section, we introduce the notation and set-up of the problem. 
In particular, we describe the uncontrolled system and its controlled 
counterpart, and outline their general properties. 
\subsection{Uncontrolled system}
Consider a stochastic process $\vc X(t)$ that is generated by the SDE,
\begin{equation}
\id\vc X = -\bnabla V(\vc X)\id t + \bsigma(\vc X) \id{\vc W},\quad \vc X(0)=\vc x^0,
\label{eq:ode}
\end{equation}
where $V:\mathbb R^n\to \mathbb R$ 
is the potential function and $\vc x^0$ denotes the initial condition of the stochastic process. 
Components of the vector $\vc W(t)\in\mathbb R^m$ ($ m\geq n$) are mutually independent,
standard Wiener processes so that the increments $W_i(t_2)-W_i(t_1)$ are Gaussian,
satisfying
\begin{subequations}
\begin{equation}
\mathbb E[W_i(t_2)- W_i(t_1)] = 0,
\end{equation}
\begin{equation}
\mathbb E[(W_i(t_2)- W_i(t_1))(W_j(t_2)- W_j(t_1))] = (t_2-t_1)\delta_{ij},
\end{equation}
\end{subequations}
for all $i,j\in\{1,2,\cdots,m\}$ and $0\leq t_1\leq t_2$. Here, $\delta_{ij}$ denotes the Kronecker delta.
The diffusion tensor $\bsigma(\vc x)\in\mathbb R^{n\times m}$ is a function of $\vc x$.
We assume that the potential $V$ and the tensor $\bsigma$ are smooth enough to ensure the
regularity of the SDE~\eqref{eq:ode}.
  
Consider the case where the potential $V$ has a finite number of disjoint local minima 
$\vc x_1, \vc x_2,\cdots, \vc x_K \in\mathbb R^n$.
In the absence of noise ($\bsigma\equiv \vc 0$), the local minima $\vc x_k$ ($k=1,2,\cdots,K$) are all locally stable fixed points 
of the ODE~\eqref{eq:ode}. The presence of noise $(\bsigma\neq \vc 0)$ allows for rare transitions
between these fixed points. We assume that one of these equilibria is desirable 
and would like to suppress the transition of system trajectories to other equilibria.
We denote the desirable fixed point by $\vc x_a$, and note that $\vc x_a=\vc x_k$ for some $k\in\{1,2,\cdots,K\}$.

Our goal is to mitigate the transitions away from the desirable fixed point $\vc x_a$, 
or at least reduced the probability of such transitions. 

\subsection{Controlled system}
In order to suppress transitions away from the desirable equilibrium $\vc x_a$, 
we add a time-delay feedback control to system~\eqref{eq:ode}.
The controlled system reads
\begin{equation}
\id {\mathbfcal{X}} = -\bnabla V(\mathbfcal X)\id t - \vc u(t-\tau)\id t +\bsigma(\mathbfcal X) \id{\vc W},
\label{eq:ode_cont}
\end{equation}
where $\tau>0$ is the delay and $\vc u(t-\tau)$ is the control term.
We denote the trajectories of the controlled system~\eqref{eq:ode_cont}
by $\mathbfcal X(t)$ to distinguish them from the trajectories $\vc X(t)$ of the uncontrolled system~\eqref{eq:ode}. The delay $\tau$ is desirable since in practice the time required 
for state measurements, and consequent actuation of the control, 
always imposes a time lag~\cite{dorf2011,farazmand2019b}. 

Here we consider linear control terms of the form
\begin{equation}
\vc u(t-\tau) = \alpha\left(\mathbfcal X(t-\tau)-\vc x_a\right),
\label{eq:u}
\end{equation}
where $\alpha>0$ is the control gain and $\vc x_a$ is the desirable equilibrium.
Note that with this control term, Eq.~\eqref{eq:ode_cont} is an SDDE. This SDDE must be supplied with the appropriate initial condition, 
$$\mathbfcal X(t) = \vc x^0(t),\quad t\in [-\tau,0],$$
where the function $\vc x^0:[-\tau,0]\to\mathbb R^n$ denotes the initial data. 

The delay feedback control~\eqref{eq:u} has been used extensively for stabilizing fixed points and 
periodic orbits in deterministic systems~\cite{ott1990,pyragas1992}. Here, we investigate their effectiveness 
in suppressing transitions away from the desirable state $\vc x_a$ in the 
stochastic context.

The delay term introduces significant difficulties in analysis of the controlled system.
In particular, the phase space of an SDDE is infinite-dimensional, and furthermore,
the Fokker--Planck equation for the density is an integro-differential equation~\cite{guillouzic1999,Frank2005}.
In order to circumvent these difficulties we derive an SDE which approximates 
the SDDE~\eqref{eq:ode_cont} in the small delay limit $0<\tau\ll 1$.
The approximating SDE is more amenable to mathematical analysis.

\section{Approximating SDE}\label{sec:approxSDE}
To derive an approximate SDE for the SDDE~\eqref{eq:ode_cont}, we expand the
control term $\mathbfcal X(t-\tau)$ in $\tau$. This type of Taylor expansion 
for the delay term has been used routinely for delay differential equations~\cite{Mazanov1974,Suresh2018}, although 
its mathematical justification remains an open problem.
Mazanov and Tognetti~\cite{Mazanov1974} point out the potential pitfalls of this approximation, namely that if too many
terms in the Taylor series are kept, the resulting approximating equation may become unstable.
However, Insperger~\cite{Insperger2015} argues with numerical examples that if $\mathcal O(\tau^2)$ 
terms are negligible, and one only keeps the $\mathcal O(\tau)$ terms, the resulting SDE is a reasonable
approximation of the original SDDE, and preserves many of its stability properties. 

Here, we apply this approximation. 
For short delays $0<\tau\ll 1$, we consider the Taylor expansion
\begin{equation}
\mathbfcal X(t-\tau) =\mathbfcal X(t)-\tau \dot{\mathbfcal  X}(t) +\mathcal O(\tau^2).
\label{eq:taylor}
\end{equation}
Substituting this expression in Eq.~\eqref{eq:ode_cont} and neglecting $\mathcal O(\tau^2)$ terms, we obtain
\begin{equation}
(1-\alpha\tau )\id{\mathbfcal X} = -\bnabla V(\mathbfcal X) \id t- \alpha\left(\mathbfcal X-\vc x_a\right)\id t +\bsigma(\mathbfcal X)\id{\vc W}. 
\label{eq:approx_sde}
\end{equation}
We refer to Eq.~\eqref{eq:approx_sde} as the \emph{approximating SDE} since
it approximates the SDDE~\eqref{eq:ode_cont}.
We further assume that $\alpha\tau<1$, since otherwise 
the desirable state $\vc x_a$ becomes unstable.

The following theorem shows that the approximating SDE can be transformed into 
an equivalent SDE by rescaling time. The rescaled equation has the form of a
standard SDE.

\begin{thm}\label{thm:RASDE}
The trajectories of the approximating SDE~\eqref{eq:approx_sde} coincide
with the trajectories of the \emph{rescaled approximating SDE}, 
\begin{equation}
\id\mathbfcal X = -\bnabla V(\mathbfcal X)\id s - \alpha \left(\mathbfcal X-\vc x_a\right)\id s +\tilde\bsigma(\mathbfcal X)\id\vc W(s),
\label{eq:rescaled_approx_sde}
\end{equation}
for the process $\mathbfcal X(s)$, where $s = t/(1-\alpha\tau)$ is the rescaled time,
$\vc W(s)$ is a standard Wiener process, and 
$\tilde \bsigma(\vc x) := \bsigma(\vc x) /\sqrt{1-\alpha\tau}$.
\end{thm}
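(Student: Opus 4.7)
The plan is a straightforward two-step manipulation: divide the approximating SDE by the constant $(1-\alpha\tau)>0$, then absorb that factor into a time rescaling, using the fact that Brownian motion has a simple scaling law. First I would rewrite Eq.~\eqref{eq:approx_sde} as
\begin{equation*}
\id\mathbfcal X = -\frac{1}{1-\alpha\tau}\bnabla V(\mathbfcal X)\id t - \frac{\alpha}{1-\alpha\tau}(\mathbfcal X-\vc x_a)\id t + \frac{1}{1-\alpha\tau}\bsigma(\mathbfcal X)\id\vc W(t),
\end{equation*}
which is legitimate because $\alpha\tau<1$ by assumption.

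Next I would introduce the rescaled time $s=t/(1-\alpha\tau)$, so that $\id t=(1-\alpha\tau)\id s$. Substituting makes both drift terms collapse to $-\bnabla V(\mathbfcal X)\id s - \alpha(\mathbfcal X-\vc x_a)\id s$, as required. The only subtlety is the diffusion term. For this, I would invoke the scaling property of the Wiener process: if $\vc W$ is a standard $\mathbb R^m$-valued Brownian motion in $t$, then for any constant $c>0$ the process $\tilde{\vc W}(s):=\vc W(cs)/\sqrt{c}$ is a standard Brownian motion in $s$. Applying this with $c=1-\alpha\tau$ gives $\id\vc W(t)=\sqrt{1-\alpha\tau}\,\id\tilde{\vc W}(s)$, so that
\begin{equation*}
\frac{1}{1-\alpha\tau}\bsigma(\mathbfcal X)\id\vc W(t) = \frac{\bsigma(\mathbfcal X)}{\sqrt{1-\alpha\tau}}\,\id\tilde{\vc W}(s) = \tilde\bsigma(\mathbfcal X)\,\id\tilde{\vc W}(s),
\end{equation*}
matching the noise term of Eq.~\eqref{eq:rescaled_approx_sde}. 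Relabeling $\tilde{\vc W}$ as $\vc W$ then yields the stated form.

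The only genuinely non-algebraic step is this Brownian rescaling, which one can either quote as a standard fact or verify directly from the two defining properties of standard Brownian motion (zero mean and covariance $\mathbb E[\tilde W_i(s_2)\tilde W_j(s_1)]=\min(s_1,s_2)\delta_{ij}$ for $s_1,s_2\ge 0$). I would state it once at the beginning and then apply it. The equivalence of trajectories in the theorem is an equality in distribution coming from this deterministic time change; since $s\mapsto t$ is a strictly increasing bijection, the processes have the same sample paths up to the reparametrization. I do not foresee any real obstacle beyond being careful about what ``coincide'' means (path equality after time change), which I would clarify in the first sentence of the proof.
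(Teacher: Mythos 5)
Your argument is correct and matches the paper's proof in essence: both rescale time via $s = t/(1-\alpha\tau)$ and rescale the Wiener process by $\sqrt{1-\alpha\tau}$ to make it standard in the new time variable; you merely divide by $(1-\alpha\tau)$ first and substitute afterward, while the paper does these in the opposite order, and you cite the Brownian scaling lemma outright while the paper verifies it by computing increment variances.
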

\begin{proof}
See Appendix~\ref{app:RASDE}.
\end{proof}

\begin{table*}[t]
\centering
\caption{List of equations. The controlled system is a stochastic delay differential equation, whereas the rest are stochastic differential equations without delay.}
\renewcommand{\arraystretch}{2}
\begin{tabularx}{.7\textwidth}{p{5.5cm} | c | c}
Equation & Terminology &	Parameters\\ 
\hline\hline
$\begin{array}{lcl}
\id\vc X &=& -\bnabla V(\vc X)\id t + \bsigma(\vc X) \id{\vc W}
\end{array}$
& Uncontrolled system (SDE) & ---\\
\hline
$\begin{array}{lcl}
\id {\mathbfcal{X}} &=& -\bnabla V(\mathbfcal X)\id t\\ 
& &- \alpha\left(\mathbfcal X(t-\tau)-\vc x_a\right)\id t\\ 
& & +\bsigma(\mathbfcal X) \id{\vc W}
\end{array}$
& Controlled system (SDDE) & $\alpha, \tau>0$\\
\hline
$\begin{array}{lcl}
(1-\alpha\tau )\id{\mathbfcal X} &= & -\bnabla V(\mathbfcal X) \id t\\
& &- \alpha\left(\mathbfcal X-\vc x_a\right)\id t\\
& & +\bsigma(\mathbfcal X)\id{\vc W}
\end{array}$
& Approximating SDE & $0<\alpha\tau<1$
\\
\hline
$\begin{array}{lcl}
\id{\mathbfcal X} &= & -\bnabla V(\mathbfcal X)\id s\\
& & - \alpha \left(\mathbfcal X-\vc x_a\right)\id s\\
& & +\tilde\bsigma(\mathbfcal X)\id\vc W
\end{array}$
& Rescaled approximating SDE & 
$\begin{array}{l}
s = t/(1-\alpha\tau), \\
\tilde\bsigma = \bsigma/\sqrt{1-\alpha\tau}
\end{array}$
\\
\hline
\end{tabularx}
\renewcommand{\arraystretch}{1}
\label{tab:eqs}
\end{table*}

Defining the \emph{effective potential} $\tilde V$,  
\begin{equation}
\tilde V(\vc x) := V(\vc x) +\frac{\alpha}{2}|\vc x-\vc x_a|^2,
\label{eq:eff_potential}
\end{equation}
the rescaled approximating SDE~\eqref{eq:rescaled_approx_sde} can be written more compactly as
\begin{equation}
\id\mathbfcal X = -\bnabla \tilde V(\mathbfcal X)\id s +\tilde\bsigma(\mathbfcal X)\id\vc W.
\end{equation}
Table~\ref{tab:eqs} contains a list of relevant equations, their names, and their parameter range.

\begin{rem}
Two remarks about Theorem~\ref{thm:RASDE} are in order:
\begin{enumerate}[label=(\alph*)]
\item The control term modifies the potential $V$ in the original SDE~\eqref{eq:ode}
to the effective potential~\eqref{eq:eff_potential}. The term $\alpha |\vc x-\vc x_a|^2/2$
increases the potential gap around the desirable state $\vc x_a$, 
deepening the potential well. As a result, 
the control further stabilizes the state $\vc x_a$, and consequently
transitions away from this state become harder. 

\item At the same time, the control intensifies the noise
by a factor of $1/\sqrt{1-\alpha\tau}$. In principle, this can facilitate 
transitions away from the desirable state $\vc x_a$.
\end{enumerate}
\end{rem}
Therefore, the delay control introduces two competing factors: One 
modifies the potential to further stabilize the desirable state $\vc x_a$, 
whereas the other increases the noise intensity. 
In the next two sections, we quantify the contribution from each factor and 
identify the regime where the stabilizing effect outperforms the 
effect of intensified noise. 

We also note that the delay $\tau$ could in principle vanish, $\tau=0$. 
In this case, the control gain $\alpha$ can be arbitrarily large and the difficulty of 
approximating a delay differential equation will be avoided. 
However, from a practical point of view, the existence of delay is 
appealing since the time required for the
state measurement, and the subsequent actuation of the control, inevitably 
introduces a delay.

\section{Stationary distribution}\label{sec:p0}
In this section, we review some well-known results about the probability density
associated with the SDE~\eqref{eq:rescaled_approx_sde}. These results are subsequently
used in Section~\ref{sec:prob_trans} to derive estimates of the transition probability away from the desirable state $\vc x_a$.

We denote the probability density function (PDF) associated with the 
rescaled approximating SDE~\eqref{eq:rescaled_approx_sde}
by $p(\vc x,t)$ such that $\mathbb P(\mathbfcal X(t)\in\mathcal S) = \int_{\mathcal S}p(\vc x,t)\id \vc x$,
for any Lebesgue-measurable set $\mathcal S\subset \mathbb R^n$.
This probability density, satisfies the Fokker--Planck equation
(or forward Kolmogorov equation)~\cite{mackey1994chaos},
\begin{equation}\label{eq:fp_general}
\partial_t p=\bnabla\cdot \left[\bnabla\tilde V(\vc x)p \right]+
\frac12\bnabla\cdot\left[\bnabla\cdot\left(\tilde\bsigma(\vc x)\tilde\bsigma(\vc x)^\top p \right)\right], 
\end{equation}
accompanied with some initial condition $p(\cdot,0)=f$
where $f:\mathbb R^n\to\mathbb R^+$ 
satisfies $\int_{\mathbb R^n}f(\vc x)\id \vc x=1$.
It follows from the definition of the probability density $p$ that it is positive semi-definite for all times
and its integral over the entire state space is $1$, i.e., 
\begin{subequations}\label{eq:p_prop}
\begin{equation}
p(\vc x,t)\geq 0, \quad \forall \vc x\in\mathbb R^n, \quad \forall t\geq 0,
\end{equation}
\begin{equation}
\int_{\mathbb R^n}p(\vc x, t)\id\vc x=1,\quad \forall t\geq 0.
\end{equation}
\end{subequations}
Note that for $\alpha=0$, the Fokker--Plank equation~\eqref{eq:fp_general} 
determines the evolution of the density $p$ 
corresponding to the uncontrolled system~\eqref{eq:ode}.

We are particularly interested in stationary densities $p_0$ 
which are invariant in time and describe the asymptotic evolution of the 
stochastic process. 

\begin{defn}
A function $p_0:\mathbb R^n\to\mathbb R^+$ is a
stationary density of the rescaled approximating SDE~\eqref{eq:rescaled_approx_sde},
if $p_0$ satisfies
\begin{equation}
\bnabla\cdot \left[\bnabla\tilde V(\vc x)p_0 \right]+
\frac12\bnabla\cdot\left[\bnabla\cdot\left(\tilde\bsigma(\vc x)\tilde\bsigma(\vc x)^\top p_0 \right)\right]=0,
\label{eq:fp_eq}
\end{equation}
and $\int_{\mathbb R^n}p_0(\vc x)\id\vc x=1$. In other words, 
$p_0$ satisfies the Fokker--Planck equation~\eqref{eq:fp_general} and
$\partial_t p_0 =0$.
\end{defn} 

In the most general case of stochastic processes, the stationary distribution may not exist. 
Even when it does, it may not be unique. In our case, under reasonable assumptions, 
the existence and uniqueness of the equilibrium density is guaranteed. In addition, 
the stationary density has a known closed-form solution. For that, we need the following assumptions.

\begin{enumerate}[label=\textbf{(A\arabic*)}]
	\item We assume that the diffusion tensor has the form $\bsigma(\vc x)\equiv \sigma\vc \Sigma$
	where $\sigma>0$ and $\vc\Sigma\in \mathbb R^{n\times m}$ is a semi-orthogonal matrix such that 
	$\vc\Sigma\vc\Sigma^\top =\vc I_n$
	where $\vc I_n$ is the $n\times n$ identity matrix.
	\item 	We assume that $V\in C^2(\mathbb R^n;[a,\infty))$, for some finite $a\in\mathbb R$,
	and furthermore
	\begin{equation}
	\int_{\mathbb R^n} e^{-V(\vc x)}\id \vc x <\infty.
	\end{equation}
\end{enumerate}

Under assumption (A1), the Fokker--Planck equation~\eqref{eq:fp_eq} simplifies to
\begin{equation}\label{eq:fp_simple}
\partial_t p=\bnabla\cdot \left[\bnabla\tilde V(\vc x)p \right]+
\tilde \nu\Delta p, \quad
p(\cdot,0)=f,
\end{equation}
where $\Delta$ denotes the Laplacian, and the diffusion constant is given by
$\tilde \nu := \tilde\sigma^2/2$ where $\tilde\sigma := \sigma/\sqrt{1-\alpha\tau}$.

Assumption (A2) implies that the potential $V$ is bounded from below, $V(\vc x)\geq a$
for all $\vc x\in\mathbb R^n$. 
We may assume, without loss of generality, that $a\geq 0$. This is permissible since the graph of $V$ can be shifted upward, by an arbitrary positive constant, without altering the dynamics of~\eqref{eq:ode}.

Assuming (A1) and (A2), the following result holds.
\begin{thm}\label{thm:EU}
Assume (A1) and (A2). Then
\begin{enumerate}
\item 
The stochastic process $\mathbfcal X(s)$,
generated by the rescaled approximating SDE~\eqref{eq:rescaled_approx_sde}, 
has a unique stationary density.

\item The stationary density is given by 
\begin{equation}
p_0(\vc x) = C \exp\left[{-\frac{\tilde V(\vc x)}{\tilde \nu}}\right],
\label{eq:fp_sol}
\end{equation}
where $C$ is a normalizing factor defined by
\begin{equation}
C = \left[ \int_{\mathbb R^n}\exp\left({-\frac{\tilde V(\vc x)}{\tilde \nu}}\right)\id\vc x \right]^{-1},
\end{equation}
so that $\int_{\mathbb R^n} p_0(\vc x)\id \vc x =1$.
\item The stationary density $p_0$ is a globally asymptotically attracting solution 
of the Fokker--Planck equation~\eqref{eq:fp_general}.
\end{enumerate}
\end{thm}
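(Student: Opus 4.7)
The plan is to verify the three claims in the natural order (2), (1), (3), since the explicit formula in (2) then drives the arguments for (1) and (3). For claim (2), observe that under assumption (A1) the diffusion is isotropic, $\tilde\bsigma\tilde\bsigma^\top = \tilde\sigma^2 \vc I_n$, so the stationary Fokker--Planck equation~\eqref{eq:fp_eq} collapses to
\begin{equation*}
\bnabla\cdot\bigl[\bnabla\tilde V(\vc x)\,p_0 + \tilde\nu\bnabla p_0\bigr]=0.
\end{equation*}
Substituting the Gibbs ansatz $p_0 = C\exp(-\tilde V/\tilde\nu)$ immediately yields $\tilde\nu\bnabla p_0 = -p_0\bnabla\tilde V$, so the bracketed probability flux vanishes identically and the equation holds trivially. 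The only nontrivial point is normalizability: by (A2), $V\geq a\geq 0$, so $\tilde V(\vc x)\geq a+(\alpha/2)|\vc x-\vc x_a|^2$, and the Gaussian tail from the control's quadratic term dominates, giving a finite $C$.

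For claim (1), I would invoke standard ergodic theory for non-degenerate gradient SDEs. The drift $-\bnabla\tilde V$ is the gradient of a $C^2$ potential that is bounded below and coercive (the additive quadratic term forces $\tilde V(\vc x)\to\infty$ as $|\vc x|\to\infty$), while the diffusion matrix $\tilde\sigma^2\vc I_n$ is uniformly elliptic. Under these Has'minskii-type hypotheses the associated Markov semigroup is strong Feller and irreducible on $\mathbb R^n$, and $\tilde V$ itself supplies a Lyapunov function with compact sublevel sets. Standard uniqueness results for invariant probability measures then force any stationary density to coincide with the one already exhibited in (2).

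Claim (3) I would establish via a relative-entropy (free-energy) argument. Setting
\begin{equation*}
H(t):=\int_{\mathbb R^n} p(\vc x,t)\,\log\frac{p(\vc x,t)}{p_0(\vc x)}\,\id\vc x,
\end{equation*}
a direct computation from~\eqref{eq:fp_simple} combined with integration by parts (legitimated by the coercivity of $\tilde V$) gives
\begin{equation*}
\dot H(t) = -\tilde\nu\int_{\mathbb R^n} p\,\Bigl|\bnabla\log\frac{p}{p_0}\Bigr|^2\id\vc x \leq 0,
\end{equation*}
with equality iff $p\equiv p_0$ almost everywhere. The Csisz\'ar--Kullback--Pinsker inequality then upgrades this monotonic dissipation into $L^1$ convergence $p(\cdot,t)\to p_0$. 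I expect the main technical obstacle to be promoting the qualitative dissipation $\dot H\leq 0$ to actual convergence, i.e.\ ruling out mass escape to infinity along the evolution. This is precisely where the coercivity of $\tilde V$ enters, through a Poincar\'e or logarithmic Sobolev inequality for $p_0$, and it is the essential analytical role played by the control's quadratic contribution to the effective potential.
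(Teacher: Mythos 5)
The paper itself offers no argument here---Theorem~\ref{thm:EU} is dispatched with a one-line citation to Ambrosio--Savar\'e--Zambotti, Matkowsky--Schuss, and Risken. Your proposal is a correct, self-contained sketch of precisely the argument those references run, so in substance you and the paper are taking the same route; you are simply writing it out. Verifying (2) by plugging the Gibbs ansatz into the flux form and observing that the gradient and diffusion contributions cancel pointwise is exactly right, as is your sharpened normalizability estimate via the added quadratic term (which in fact makes $\tilde V$ coercive regardless of how slowly $V$ grows, so (A2) is not even needed for the controlled density to be normalizable). The Has'minskii route to (1) via uniform ellipticity, strong Feller, irreducibility, and the Lyapunov function $\tilde V$ is the standard one.

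On (3) you have correctly identified where the genuine work lies and computed the de Bruijn dissipation identity accurately. The one place I would push back is the suggestion that coercivity of $\tilde V$ alone supplies the required Poincar\'e or log-Sobolev inequality. Coercivity guarantees the tightness needed to rule out mass escape, but neither the Bakry--\'Emery criterion nor Poincar\'e follows from coercivity by itself: $\tilde V = V + \frac{\alpha}{2}|\vc x - \vc x_a|^2$ need not be convex, because the Hessian of $V$ can be arbitrarily negative between the wells and $\alpha$ is bounded by $1/\tau$. The cited references (Matkowsky--Schuss in particular) establish exponential convergence by spectral analysis of the Fokker--Planck operator under suitable growth and regularity hypotheses on $V$; if you want to keep the entropy route, you would instead invoke a Holley--Stroock perturbation argument (bounded perturbation of a uniformly convex potential), which requires an additional hypothesis that the nonconvex part of $V$ is bounded. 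This is a technical refinement rather than a flaw in your plan, but it is worth naming, since the assumptions (A1)--(A2) as stated do not by themselves deliver a functional inequality.
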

\begin{proof}
This is a well-known result. The proof can be found in, e.g., Refs.~\onlinecite{Ambrosio2008,matkowsky1981,Risken1989}
\end{proof}

\begin{rem}
A few remarks on Theorem~\ref{thm:EU} are in order.
\begin{enumerate}[label=(\alph*)]
\item Theorem~\ref{thm:EU} holds for the uncontrolled system by setting $\alpha=0$, in which case 
$\tilde V=V$, and $\tilde\nu = \nu$ where $\nu := \sigma^2/2$.
\item The fact that the stationary density~\eqref{eq:fp_sol} is globally attracting implies that
all initial densities $f(\vc x)$ converge asymptotically to the unique stationary density $p_0$.
\end{enumerate}
\end{rem}

Therefore, the asymptotic state of the controlled system can be approximated by the 
stationary density~\eqref{eq:fp_sol}. Evaluating this density only requires the knowledge of the effective potential $\tilde V$.
As a result, no numerical integration of the SDDE~\eqref{eq:ode_cont}, or its rescaled approximating 
SDE~\eqref{eq:rescaled_approx_sde}, is required. In Section~\ref{sec:prob_trans}, we use the stationary
density $p_0$ to approximate the probability of transitions away from the desirable equilibrium $\vc x_a$.

\section{Quantifying the probability of transitions}\label{sec:prob_trans}
In this section, we estimate the probability of transitions away from the
desirable equilibrium $\vc x_a$ towards an undesirable (or `bad') state $\vc x_b\in\mathbb R^n$. 
Although our results hold for any state $\vc x_b$ (not necessarily an equilibrium), 
we are particularly interested in the case where $\vc x_b\neq \vc x_a$ is a local minimum of the 
potential $V$.

In order to quantify the transitions away from the desirable equilibrium $\vc x_a$,
we consider the ratio of the probability of $\mathbfcal X$ being close to
$\vc x_b$ to the probability of $\mathbfcal X$ being close to
$\vc x_a$. More precisely, we define the ratio
\begin{equation}
r_c(\epsilon,\vc x_a,\vc x_b) := \frac{\Pr(|\mathbfcal X-\vc x_b|\leq \epsilon)}{\Pr(|\mathbfcal X-\vc x_a|\leq \epsilon)},
\label{eq:rc}
\end{equation}
where the subscript $c$ denotes the ratio corresponding to the controlled system,
and $\epsilon>0$ is a small radius.
The smaller the ratio $r_c(\epsilon,\vc x_a,\vc x_b)$, the lower is the probability of
transitions away from the desirable equilibrium $\vc x_a$ and towards the 
undesirable equilibrium $\vc x_b$.

If $\mathbfcal X$ is a solution of the controlled SDDE~\eqref{eq:ode_cont}, then
the probabilities in Eq.~\eqref{eq:rc} can be estimated from an ensemble of long term 
numerical simulations. However, if we work with the rescaled 
approximating SDE~\eqref{eq:rescaled_approx_sde}, 
such expensive numerical simulations are unnecessary.
Note that the relevant regime here is the asymptotic steady state of the system
after the initial transients have decayed. Therefore, the probabilities in Eq.~\eqref{eq:rc}
are taken with respect to the steady state probability density~\eqref{eq:fp_sol}.

The following theorem 
establishes an upper bound for the ratio $r_c$
if $\mathbfcal X$ is given by the rescaled approximating SDE~\eqref{eq:rescaled_approx_sde}.
Here, $\mathcal B_\epsilon(\vc x_0)=\{\vc x\in\mathbb R^n: |\vc x-\vc x_0|\leq \epsilon\}$
denotes the closed ball of radius $\epsilon$ centered at $\vc x_0$.

\begin{thm}\label{thm:r_c}
	Assume (A1) and (A2), and let $\epsilon>0$ be small enough such that $\mathcal B_\epsilon(\vc x_a)\cap \mathcal B_\epsilon(\vc x_b)=\emptyset$.
	If the stochastic process $\mathbfcal X(s)$ is generated by the rescaled approximating SDE~\eqref{eq:rescaled_approx_sde},
	then 
	\begin{align}
	r_c(\epsilon,\vc x_a,\vc x_b) \leq 
	\exp\Big[ & \frac{\delta V(\vc x_a,\vc x_b,\epsilon)}{\tilde \nu}+\frac{\epsilon^2}{\beta^2} \nonumber\\
	&-\frac{\min_{\vc x\in\mathcal B_\epsilon(\vc x_b)}|\vc x-\vc x_a|^2}{\beta^2}\Big],
	\label{eq:rc_ub}
	\end{align}
	where 
	\begin{subequations}
		\begin{equation}
		\delta V(\epsilon,\vc x_a,\vc x_b) := 
		\max_{\vc x\in\mathcal B_\epsilon(\vc x_a)} V(\vc x)-\min_{\vc x\in\mathcal B_\epsilon(\vc x_b)} V(\vc x),
		\end{equation}
		\begin{equation}
		\beta = \frac{\sigma}{\sqrt{\alpha(1-\alpha\tau)}}, \quad \tilde \nu = \frac{\tilde \sigma^2}{2} = \frac{\sigma^2}{2(1-\alpha\tau)}.
		\end{equation}
	\end{subequations}
\end{thm}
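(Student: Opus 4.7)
The plan is to reduce the bound to simple extremal estimates for the stationary density $p_0$ on the two balls. By Theorem~\ref{thm:EU}, the stochastic process $\mathbfcal X(s)$ is eventually distributed according to $p_0(\vc x) = C\exp[-\tilde V(\vc x)/\tilde\nu]$, so in the asymptotic regime
\begin{equation*}
r_c(\epsilon,\vc x_a,\vc x_b) = \frac{\int_{\mathcal B_\epsilon(\vc x_b)} p_0(\vc x)\,\id\vc x}{\int_{\mathcal B_\epsilon(\vc x_a)} p_0(\vc x)\,\id\vc x}.
\end{equation*}
Since $\mathcal B_\epsilon(\vc x_a)$ and $\mathcal B_\epsilon(\vc x_b)$ are disjoint closed balls of equal volume, the normalizing constant $C$ and the Lebesgue measure of the ball both cancel once I bound the numerator from above by the volume times the supremum of $p_0$ on $\mathcal B_\epsilon(\vc x_b)$, and the denominator from below by the volume times the infimum of $p_0$ on $\mathcal B_\epsilon(\vc x_a)$.

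Next, I would split the effective potential using its definition~\eqref{eq:eff_potential}, writing
\begin{equation*}
-\frac{\tilde V(\vc x)}{\tilde\nu} = -\frac{V(\vc x)}{\tilde\nu} - \frac{\alpha}{2\tilde\nu}|\vc x-\vc x_a|^2 = -\frac{V(\vc x)}{\tilde\nu} - \frac{|\vc x-\vc x_a|^2}{\beta^2},
\end{equation*}
where the identity $\alpha/(2\tilde\nu) = \alpha(1-\alpha\tau)/\sigma^2 = 1/\beta^2$ follows directly from the definitions of $\tilde\nu$ and $\beta$ given in the statement. With this split, taking the maximum of $p_0$ over $\mathcal B_\epsilon(\vc x_b)$ amounts to replacing $V(\vc x)$ by its minimum over that ball and $|\vc x-\vc x_a|^2$ by its minimum over that ball. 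Dually, taking the minimum of $p_0$ over $\mathcal B_\epsilon(\vc x_a)$ replaces $V(\vc x)$ by its maximum there, and $|\vc x-\vc x_a|^2$ by its maximum over $\mathcal B_\epsilon(\vc x_a)$, which is simply $\epsilon^2$.

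Combining the two bounds, the ratio of exponentials yields
\begin{equation*}
r_c(\epsilon,\vc x_a,\vc x_b) \leq \exp\!\left[\frac{\max_{\mathcal B_\epsilon(\vc x_a)} V - \min_{\mathcal B_\epsilon(\vc x_b)} V}{\tilde\nu} + \frac{\epsilon^2 - \min_{\vc x\in\mathcal B_\epsilon(\vc x_b)}|\vc x-\vc x_a|^2}{\beta^2}\right],
\end{equation*}
and recognizing the first term in the bracket as $\delta V(\vc x_a,\vc x_b,\epsilon)/\tilde\nu$ gives exactly~\eqref{eq:rc_ub}. The disjointness hypothesis $\mathcal B_\epsilon(\vc x_a)\cap\mathcal B_\epsilon(\vc x_b)=\emptyset$ is what guarantees that $\min_{\vc x\in\mathcal B_\epsilon(\vc x_b)}|\vc x-\vc x_a|^2>0$, so the last term is genuinely negative and can contribute a useful suppression factor from the quadratic part of the effective potential.

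There is no real conceptual obstacle; the argument is a crude but tight ``sup/inf over the ball'' estimate applied to the closed-form stationary density from Theorem~\ref{thm:EU}. The only place to be careful is in keeping track of which extremum (min or max) goes with the numerator and which with the denominator, and in checking the algebraic identification $\alpha/(2\tilde\nu)=1/\beta^2$ so that the two contributions to the exponent combine into the stated form.
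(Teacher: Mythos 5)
Your proof is correct and follows essentially the same route as the paper's (Appendix~\ref{app:r_c}): express $r_c$ as the ratio of integrals of the stationary density $p_0$ over the two balls, factor $e^{-\tilde V/\tilde\nu}$ into the $V$-part and the quadratic part using $\alpha/(2\tilde\nu)=1/\beta^2$, bound each factor by its extremum over the relevant ball, and note the equal ball volumes cancel. The only cosmetic difference is that you present the two elementwise bounds as a single sup/inf estimate on $p_0$, whereas the paper pulls out the $V$-factor first and then bounds the remaining Gaussian ratio; the underlying inequalities are identical.
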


\begin{proof}
See Appendix~\ref{app:r_c}.
\end{proof}

Figure~\ref{fig:schem_rc} depicts the quantities appearing in 
Theorem~\ref{thm:r_c}. Smaller upper bounds in Eq.~\eqref{eq:rc_ub} guarantee lower
probability of transition from the desirable equilibrium $\vc x_a$ to the undesirable equilibrium $\vc x_b$.
There are two primary competing factors contributing to this upper bound. 
First, a larger distance between the desirable equilibrium $\vc x_a$ and
the undesirable equilibrium $\vc x_b$ leads to a smaller upper bound. 
This distance is approximated by $\min_{\vc x\in\mathcal B_\epsilon(\vc x_b)}|\vc x-\vc x_a|^2$, 
and the upper bound vanishes exponentially fast as this quantity grows. 

\begin{figure}
\centering
\includegraphics[width=.45\textwidth]{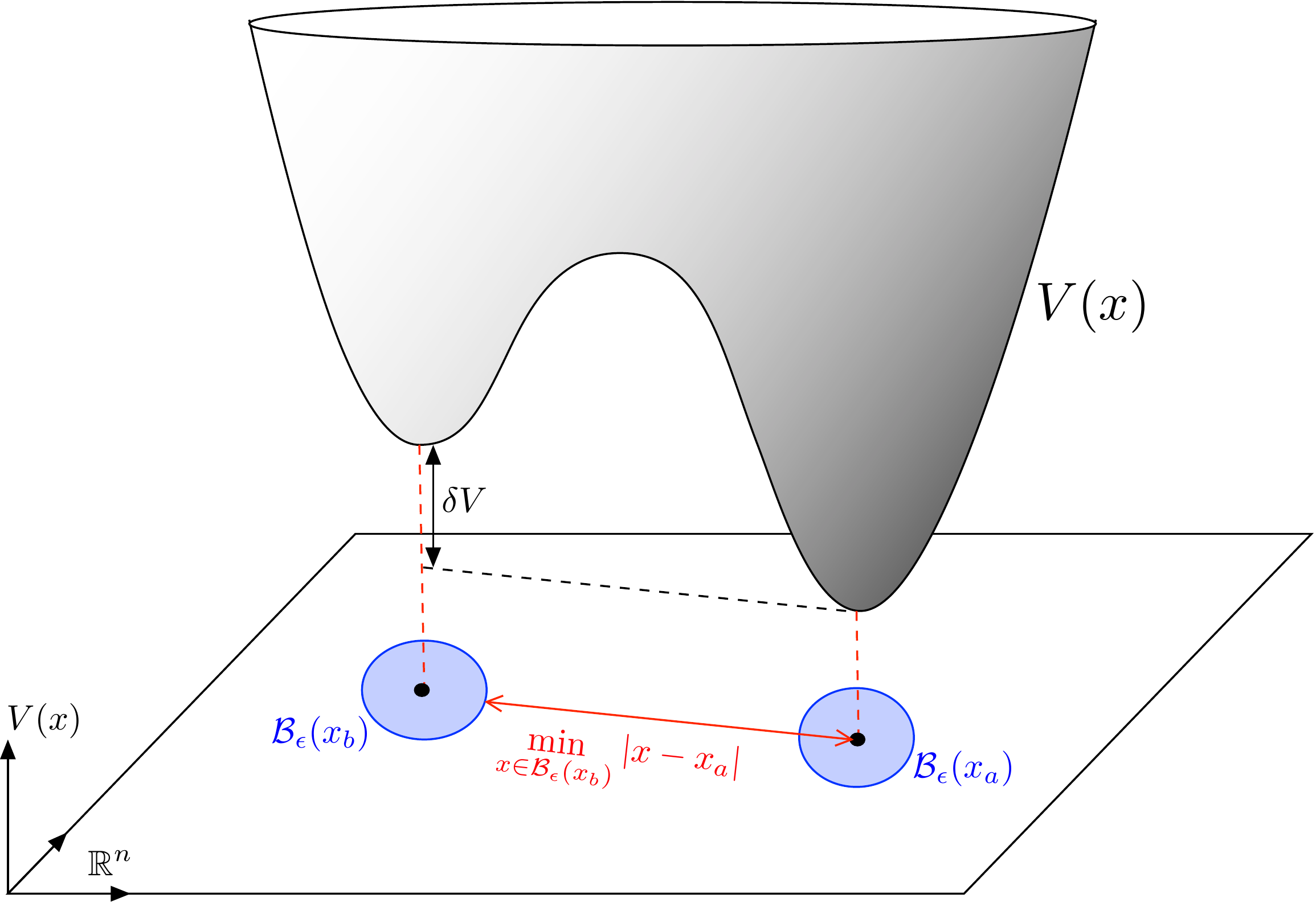}
\caption{A schematic showing various quantities appearing in Theorem~\ref{thm:r_c}.
The balls $\mathcal B_\epsilon(\vc x_a)$ and $\mathcal B_\epsilon(\vc x_b)$ denote
the $\epsilon$ neighborhoods of the desirable equilibrium $\vc x_a$ and the undesirable equilibrium 
$\vc x_b$, respectively. The surface is the graph of the potential $V$, and 
$\delta V$ denotes the potential difference at the equilibrium points as $\epsilon\to 0^+$, i.e., 
$\lim_{\epsilon\to 0^+}\delta V(\epsilon,\vc x_a,\vc x_b)$.}
\label{fig:schem_rc}
\end{figure}

A second contributing factor is the potential gap $\delta V(\epsilon,\vc x_a,\vc x_b)$.
If $\max_{\vc x\in\mathcal B_\epsilon(\vc x_a)} V(\vc x)<\min_{\vc x\in\mathcal B_\epsilon(\vc x_b)}V(\vc x)$, 
then the potential gap also contributes to
reducing the upper bound, hence reducing the probability of transitions. 
On the other hand, if $\max_{\vc x\in\mathcal B_\epsilon(\vc x_a)} V(\vc x)>\min_{\vc x\in\mathcal B_\epsilon(\vc x_b)}V(\vc x)$, then the potential gap increases the upper bound. 
In this case, the upper bound on the probability of transition depends
on the competition between the distance between two equilibria $\vc x_a$ and $\vc x_b$,
and the potential gap $\delta V$.

The following corollary shows more clearly the contributions of these two competing factors
by taking the limit $\epsilon\to 0$.

\begin{cor}
Assume (A1) and (A2). If the stochastic process $\mathbfcal X(s)$ is generated by the rescaled approximating SDE~\eqref{eq:rescaled_approx_sde},
then 
\begin{align}
\lim_{\epsilon\to 0^+} r_c(\epsilon;\vc x_a,\vc x_b) \leq 
\exp\Big\{ -\frac{\alpha(1-\alpha\tau)}{\sigma^2}\big[ & \frac{2}{\alpha}
\left(V(\vc x_b)-V(\vc x_a)\right)\nonumber\\ 
&+ |\vc x_b-\vc x_a|^2
\big]\Big\}.
\label{eq:rc_ub_0}
\end{align}
\label{cor:rc}
\end{cor}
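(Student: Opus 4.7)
The plan is straightforward: pass to the limit $\epsilon \to 0^+$ inside the exponential bound provided by Theorem~\ref{thm:r_c}, using continuity of $V$ and the Euclidean norm, and then collect the constants $\tilde\nu$ and $\beta$ in terms of $\sigma, \alpha, \tau$.

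First I would justify passing the limit inside the exponential. Since $\exp$ is continuous on $\mathbb R$, it suffices to take the limit of each term in the argument. Since $V\in C^2(\mathbb R^n)$ by assumption (A2), $V$ is continuous on the closed balls $\mathcal B_\epsilon(\vc x_a)$ and $\mathcal B_\epsilon(\vc x_b)$, so the attained max/min converge pointwise:
\begin{equation*}
\max_{\vc x\in\mathcal B_\epsilon(\vc x_a)} V(\vc x) \to V(\vc x_a), \qquad
\min_{\vc x\in\mathcal B_\epsilon(\vc x_b)} V(\vc x) \to V(\vc x_b)
\end{equation*}
as $\epsilon\to 0^+$, and hence $\delta V(\epsilon,\vc x_a,\vc x_b)\to V(\vc x_a)-V(\vc x_b)$. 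The term $\epsilon^2/\beta^2$ vanishes, and by continuity of $\vc x \mapsto |\vc x-\vc x_a|^2$, $\min_{\vc x\in\mathcal B_\epsilon(\vc x_b)}|\vc x-\vc x_a|^2 \to |\vc x_b-\vc x_a|^2$.

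Next I would collect the resulting exponent. It reads
\begin{equation*}
\frac{V(\vc x_a)-V(\vc x_b)}{\tilde\nu} - \frac{|\vc x_b-\vc x_a|^2}{\beta^2}.
\end{equation*}
Substituting $\tilde\nu = \sigma^2/[2(1-\alpha\tau)]$ gives $1/\tilde\nu = 2(1-\alpha\tau)/\sigma^2$, and $\beta^2 = \sigma^2/[\alpha(1-\alpha\tau)]$ gives $1/\beta^2 = \alpha(1-\alpha\tau)/\sigma^2$. Then
\begin{equation*}
\frac{V(\vc x_a)-V(\vc x_b)}{\tilde\nu} = -\frac{2(1-\alpha\tau)}{\sigma^2}\bigl(V(\vc x_b)-V(\vc x_a)\bigr),
\end{equation*}
and factoring out $-\alpha(1-\alpha\tau)/\sigma^2$ from both terms produces
\begin{equation*}
-\frac{\alpha(1-\alpha\tau)}{\sigma^2}\left[\frac{2}{\alpha}\bigl(V(\vc x_b)-V(\vc x_a)\bigr) + |\vc x_b-\vc x_a|^2\right],
\end{equation*}
which is precisely the exponent in~\eqref{eq:rc_ub_0}.

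There is essentially no obstacle here: the statement is a limiting version of Theorem~\ref{thm:r_c} combined with an algebraic rearrangement. The only point deserving a line of care is the inequality in the limit—because $r_c(\epsilon,\vc x_a,\vc x_b) \le F(\epsilon)$ for every admissible $\epsilon>0$ and $F$ is continuous at $0$, $\limsup_{\epsilon\to 0^+} r_c \le \lim_{\epsilon\to 0^+}F(\epsilon)$, which yields the claim as stated.
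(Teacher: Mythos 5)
Your proof is correct and takes essentially the same route as the paper, which simply cites Theorem~\ref{thm:r_c}, takes $\epsilon\to 0^+$, and notes $\tilde\nu = \alpha\beta^2/2$. You merely spell out the continuity arguments and the algebraic rearrangement that the paper leaves implicit.
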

\begin{proof}
This follows from Theorem~\ref{thm:r_c} by taking the limit $\epsilon\to 0^+$ in the 
inequality~\eqref{eq:rc_ub}, and noting that $\tilde \nu = \alpha\beta^2/2$.
\end{proof}

Therefore, in the limit $\epsilon\to 0^+$ and prescribed delay $\tau$ and gain $\alpha$, 
the upper bound is defined by the potential gap $V(\vc x_b)-V(\vc x_a)$ and 
the distance $|\vc x_b-\vc x_a|$. A larger distance between the equilibria always 
leads to a lower probability of transition. If $V(\vc x_b)>V(\vc x_a)$, the
potential gap also contributes to reduce the probability of transitions. However, 
if $V(\vc x_b)<V(\vc x_a)$, then the upper bound depends on the competition between 
the potential difference and the distance between the equilibria. 
In particular, if $V(\vc x_a)-V(\vc x_b)  \gg |\vc x_b-\vc x_a|^2$, then
the control may not lead to a meaningful reduction in the transition probability.

Note that in a given problem, the potential gap $V(\vc x_b)-V(\vc x_a)$
and the distance $|\vc x_b-\vc x_a|$ are prescribed. Therefore, the only free parameters are 
the delay $\tau$ and the control gain $\alpha$. 
Assuming no delay, i.e. $\tau=0$, the gain $\alpha$ can be arbitrary large 
and eventually all transitions are mitigated (since the upper bound vanishes as $\alpha\to \infty$).
However, in the presence of delay $\tau>0$, the gain $\alpha$ cannot be arbitrarily large since
$\alpha\tau<1$. In this case, the gain $\alpha$ needs to be chosen judiciously.

We emphasize that the upper bounds in Theorem~\ref{thm:r_c} and Corollary~\ref{cor:rc}
are valid for the stochastic process $\mathbfcal X(s)$ generated by the 
rescaled approximating SDE~\eqref{eq:rescaled_approx_sde},
and not the original controlled SDDE~\eqref{eq:ode_cont}. 
Nonetheless, we show in the next session, that for
short time delays $\tau$, these upper bounds are reliable approximations 
for the controlled system~\eqref{eq:ode_cont}. 

\section{Numerical results}\label{sec:numerics}
The purpose of this section is twofold. First, we demonstrate with 
two examples that the controlled system~\eqref{eq:ode_cont} does indeed
mitigate the transitions away from the desirable equilibrium.
Second, we demonstrate the validity of the rescaled approximating SDE~\eqref{eq:rescaled_approx_sde} as an approximation of the controlled system
when the delay $\tau$ is relatively small.
Recall that mathematical analysis of the SDDE~\eqref{eq:ode_cont} is 
cumbersome and its numerical simulations are expensive. 
In contrast, there are many useful analytical results for the rescaled approximating SDE
as presented in sections~\ref{sec:approxSDE},~\ref{sec:p0} and \ref{sec:prob_trans}.

In a given problem, the potential $V$ and the desirable equilibrium $\vc x_a$ are prescribed. 
Therefore, the only free parameters are the delay $\tau$ and the control gain $\alpha$.
The delay $\tau$ is a consequence of the fact that the 
measurement of the state $\mathbfcal X$,
and the consequent actuation of the control, introduce a delay in practice.
In the following examples, we assume that this delay is made as small as possible so that
the approximation~\eqref{eq:taylor} is valid. 

For comparison, Monte Carlo simulations of the uncontrolled SDE~\eqref{eq:ode}
and the controlled SDDE~\eqref{eq:ode_cont} are carried out.  
This requires generating sample trajectories of the stochastic equations by numerical integration.
These simulations are carried out by the
predictor-corrector scheme of Cao et al.~\cite{cao2015}. 
For completeness, we review this numerical scheme in Appendix~\ref{app:num_sdde}.

\subsection{One-dimensional potential}\label{sec:DW_1d}
\begin{figure}
	\centering
	\includegraphics[width=.35\textwidth]{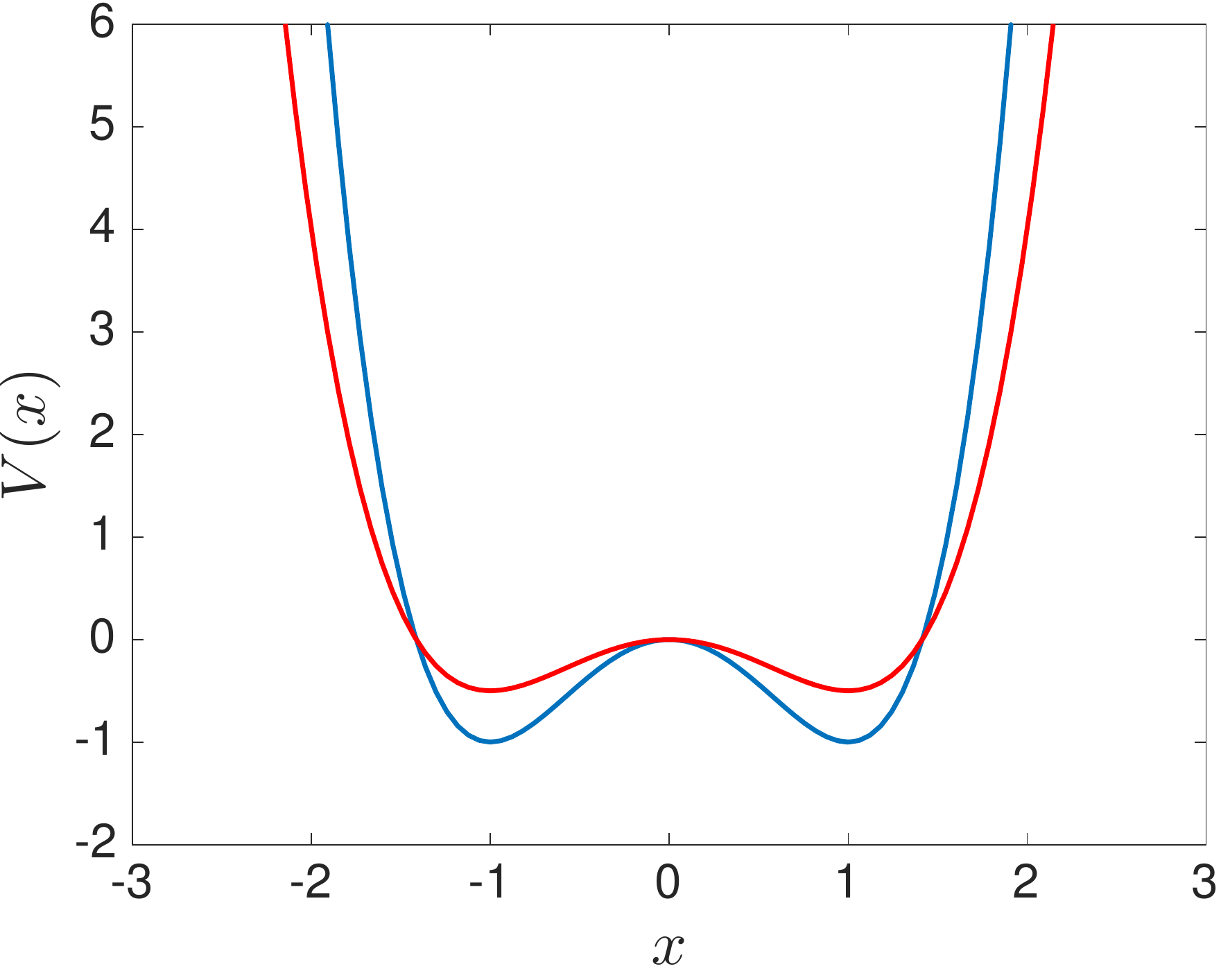}
	\caption{The one-dimensional potential function $V(x)=Ax^2+Bx^4$. We set $B=c^2$ and $A=-2B$.
		The potential for two parameter values $c=1$ (blue) and $c=1/\sqrt 2$ (red) is shown.
	}
	\label{fig:DW_V}
\end{figure}

We first consider a one-dimensional example with the potential 
\begin{equation}
V(x) = Ax^2+Bx^4,
\end{equation}
where $A<0$ and $B>0$. We assume $B=c^2$, for some constant $c$, and 
$A=-2B$. The depth of the potential well is controlled by the parameter $c=A/2\sqrt{B}$
(see, e.g., Ref.~\onlinecite{blaise2012}).
Figure~\ref{fig:DW_V} shows the potential $V$ for two choices of the parameter $c$.
\begin{figure*}
	\centering
	\includegraphics[width=.8\textwidth]{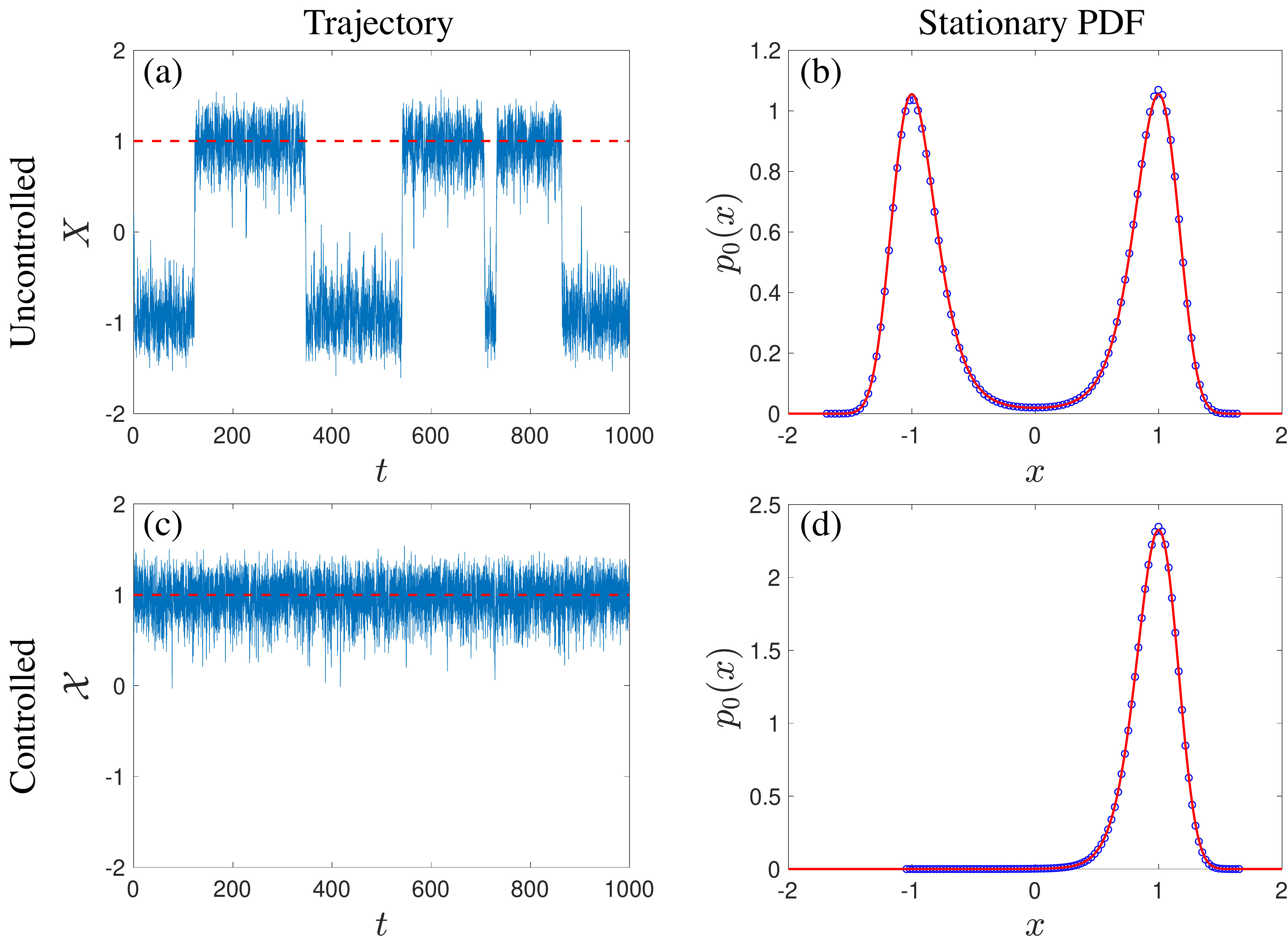}
	\caption{
		(a) A sample trajectory of the uncontrolled system~\eqref{eq:DW_1d_uncont}. The red dashed line marks the desirable equilibrium $x_a=+1$. 
		(b) Stationary PDF of the uncontrolled system. The blue circles mark the PDF estimated from longterm direct numerical simulations of the uncontrolled system~\eqref{eq:DW_1d_uncont}. 
		The solid red curve is the exact stationary PDF (see Theorem~\ref{thm:EU}).
		(c) A sample trajectory of the controlled system~\eqref{eq:DW_1d} with $\tau=0.1$ and
		$\alpha=1$. (d) The stationary PDF of the controlled system. The blue circles mark the PDF estimated from longterm direct numerical simulations of the controlled system~\eqref{eq:DW_1d}. 
		The solid red curve is the exact stationary PDF of the rescaled approximating SDE (see Theorem~\ref{thm:EU}).
	}
	\label{fig:DW_FP_1d}
\end{figure*}

The corresponding deterministic differential
equation $\dot x = -V'(x)$ has three fixed points located at $x=0,\pm \sqrt{-A/2B}$. The origin is an unstable fixed point while 
$\pm\sqrt{-A/2B}$ are stable. If the initial condition $x(0)=x^0$ satisfies $x^0>0$, then 
the trajectories of the deterministic system converge asymptotically to the stable fixed point $x=+\sqrt{-A/2B}$. Conversely, if $x^0<0$, the trajectories converge to the stable fixed point $x=-\sqrt{-A/2B}$.
However, addition of noise allows for transitions between the two fixed points
without the trajectory ever settling onto one of them.

In the following, we set $c=1/\sqrt{2}$, $B=c^2$ and $A=-2B$. This implies that the fixed points are located at $x=0,\pm 1$. The resulting uncontrolled SDE reads
\begin{equation}
\id X = -( -2X+2X^3)\id t + \sigma \id W,
\label{eq:DW_1d_uncont}
\end{equation}
where the noise intensity is $\sigma =1/2$.
Note that $V'(x) = -2x+2x^3$.
Figure~\ref{fig:DW_FP_1d}(a) shows a sample trajectory of this SDE which switches
randomly between the fixed points $x=\pm 1$.
Figure~\ref{fig:DW_FP_1d}(b) shows the resulting stationary distribution $p_0$
(blue circles), estimated from $200$ direct numerical simulations.
Each simulation is $1000$ time units long with the sampling rate of $0.01$, 
resulting in $2\times 10^7$ samples.
Figure~\ref{fig:DW_FP_1d}(b) also shows the exact stationary density~\eqref{eq:fp_sol}
with $\alpha=0$ (solid red line) which is in agreement with direct numerical simulations.

Next we consider the controlled system. Without loss of generality, we assume that
$x_a=+1$ is the desirable equilibrium
and $x_b=-1$ is the undesirable equilibrium. In this case,
the controlled SDDE reads
\begin{equation}
\id \mathcal X = -\left( -2\mathcal  X + 2\mathcal  X^3\right)\id t 
-\alpha (\mathcal X(t-\tau)-1)\id t + \sigma \id W, 
\label{eq:DW_1d}
\end{equation}
where the term $-\alpha (\mathcal X(t-\tau)-1)$ is the time-delay feedback control. 
For $\alpha=0$ we recover the uncontrolled system~\eqref{eq:DW_1d_uncont}.
Figure~\ref{fig:DW_FP_1d}(c) shows a sample trajectory of the controlled system
with $\alpha=1$ and $\tau=0.1$. Note that this trajectory has
no transitions to the neighborhood of the undesirable equilibrium. 
Ensemble simulations exhibit a similar behavior. Figure~\ref{fig:DW_FP_1d}(d)
shows that probability density of $\mathcal X(t)$ from $200$ simulations, 
each consisting of $1000$ time units. In contrast to the uncontrolled system,
the stationary density of the controlled system is unimodal with most of the
density concentrated around the desirable equilibrium $x_a=+1$. 
As a result, transitions to the neighborhood of the undesirable equilibrium $x_b=-1$
are very unlikely. 

Recall that in this experiment, we chose the control gain $\alpha=1$ and
the delay $\tau=0.1$. How would one choose the values of these variables?
The answer is not clear if we attempt to work with the controlled SDDE~\eqref{eq:ode_cont}. 
However, if the delay is small, the results found in sections~\ref{sec:p0} and~\ref{sec:prob_trans}
provide an inexpensive method to choose these parameters.

The delay must be chosen as small as possible. In practice, there is a lower bound
on how small the delay may be, dictated by the amount of time it takes to measure
the state $\mathcal X$. Lets fix the delay at $\tau=0.1$. 
Figure~\ref{fig:effV_1d} shows the effective potential $\tilde V$ as $\alpha$ varies.
For $\alpha=0$, we recover the uncontrolled potential $V$ with two minima at 
$x_a=+1$ and $x_b=-1$. As $\alpha$ increases, the effective potential around
the undesirable equilibrium $x_b=-1$ increases while the effective potential at the desirable 
equilibrium $x_a=+1$ remains unchanged. Eventually, for $\alpha$ large enough, the minimum
around the undesirable equilibrium disappears and the effective potential has a unique minimum
at the desirable equilibrium $x_a=+1$ (see, e.g., the curve corresponding to $\alpha=1$).

Correspondingly, at $\alpha=0$ (no control), the stationary density $p_0$ is
bimodal with equal probability of visiting each of the equilibria $x_a=+1$ and $x_b=-1$.
However, as $\alpha$ increases the density around the undesirable equilibrium $x_b=-1$
decreases. Eventually, for large enough $\alpha$ (e.g., $\alpha=1$), 
the stationary density becomes unimodal concentrated around the desirable
equilibrium $x_a=+1$. 

These observations reveal that $\alpha=1$ is a good choice for the control gain
at delay $\tau=0.1$. Note that since the effective potential $\tilde V$ and 
the stationary density $p_0$ are known analytically, this analysis 
is computationally inexpensive. In particular, it does not require 
the numerical integration of any SDEs or SDDEs. 

Furthermore, for $\tau=0.1$ and $\alpha=1$, the upper bound in 
Corollary~\ref{cor:rc} is approximately $5.6\times 10^{-7}$. 
This roughly means that the probability of the state being around 
the undesirable equilibrium $x_b=-1$ is $5.6\times 10^{-7}$
times smaller than the state being around the desirable equilibrium $x_a=+1$.

There is no general recipe for choosing the delay $\tau$.
As mentioned earlier, it must be as small as possible, so that the 
approximating SDE is a reasonable approximation of the controlled SDDE. 
It is still interesting to investigate whether the controlled system
would mitigate transitions away from the desirable equilibrium 
for larger delays. Figure~\ref{fig:large_delay} shows how the stationary density
varies as the delay increases. For $\tau=0.2$, the stationary density is unimodal, 
centered at the desirable equilibrium with negligible probability of transitioning to 
the undesirable equilibrium. In addition, the exact stationary density of the approximating
SDE is still a good approximation to the numerically estimated PDF. 

The results are similar for $\tau=0.4$. However, the exact PDF of the approximating SDE starts to deviate 
from the numerically estimated PDF of the SDDE. This is to be expected since, 
as the delay increases, the neglected $\mathcal O(\tau^2)$ terms become significant.
Even at $\tau=2$, the stationary PDF is unimodal and the probability of transitions is negligible. 
Note that at $\tau=2$, the approximating SDE is no longer valid since $\alpha\tau>1$.

If we keep increasing the delay, eventually for $\tau>5$ the control fails in the sense that the
stationary PDF becomes bimodal again with non-negligible density around the undesirable equilibrium $x_b=-1$.

\begin{figure}
\centering
\includegraphics[width=.35\textwidth]{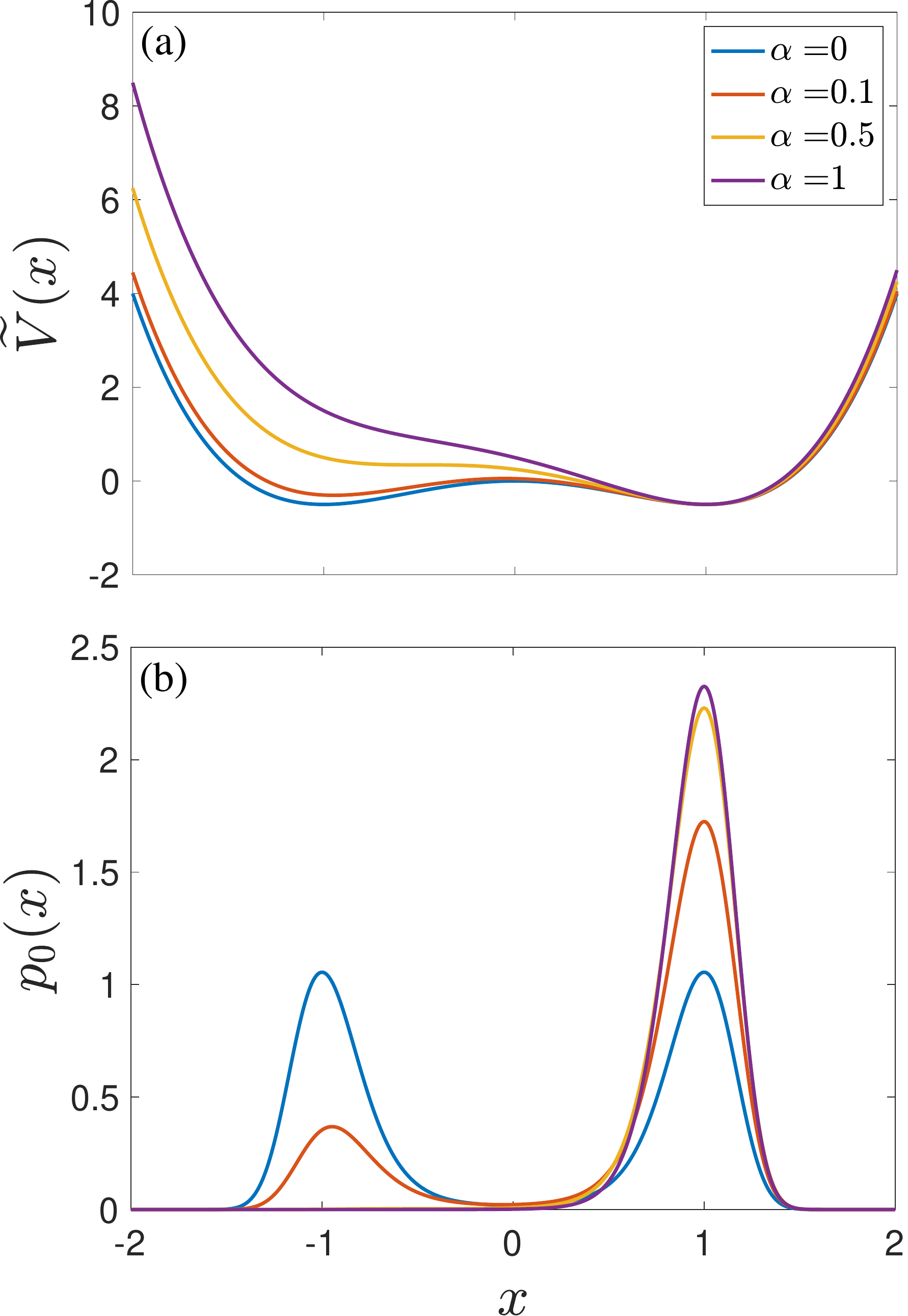}
\caption{The effective potential and the stationary density
for various control gains $\alpha$.
(a) The effective potential $\tilde V(x)$. The case with $\alpha=0$ corresponds to the uncontrolled system.
(b) The corresponding stationary densities $p_0(x)$.
}
\label{fig:effV_1d}
\end{figure}

\begin{figure*}
	\centering
	\includegraphics[width=.9\textwidth]{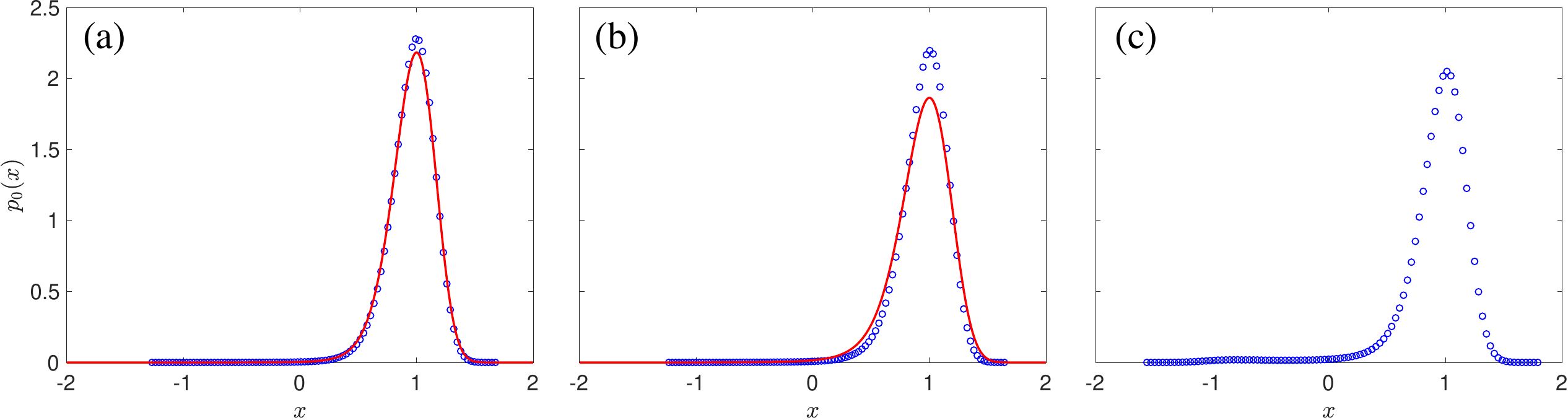}
	\caption{The stationary density of the controlled system with $\alpha=1$. 
		The blue circles mark the PDF estimated from direct numerical simulations of the SDDE~\eqref{eq:ode_cont}.
		The red solid lines mark the exact stationary density of the rescaled approximating SDE~\eqref{eq:rescaled_approx_sde}. (a) $\tau=0.2$, (b) $\tau=0.4$ and (c) $\tau=2$.
		Note that, for $\tau=2$, the rescaled approximating SDE is not valid since $\alpha\tau>1$.
	}
\label{fig:large_delay}
\end{figure*}

\subsection{Two-dimensional potential}\label{sec:DW_2d}
In this section, we present a two-dimensional example
with the potential
\begin{equation}
V(\vc x)=a_0 x^4-a_1 x^2 +a_2 y^2 -a_3 x^2y-a_4 \left(x-x_0\right)^2
\label{eq:VueDuffing}
\end{equation}
where $\vc x=(x,y)$. The parameters are 
$a_0=1$, $a_1=3/8$, $a_2=1/4$, $a_3=\sqrt{4a_2(a_0-2a_1)}=1/2$, $a_4=0.02$
and $x_0=\sqrt{2a_1a_2/(4a_0a_2-a_3^2)}=1/2$.
This potential has two minima at 
\begin{equation}
\vc x_a = (x_0,a_3 x_0^2/2a_2), \quad \vc x_b = (-0.5254, 0.2760),
\end{equation}
and a saddle at $\vc x_s=(0.0254, 0.0006)$ (see figure~\ref{fig:ueDuffing_V}).

\begin{figure*}
\centering
\includegraphics[width=.9\textwidth]{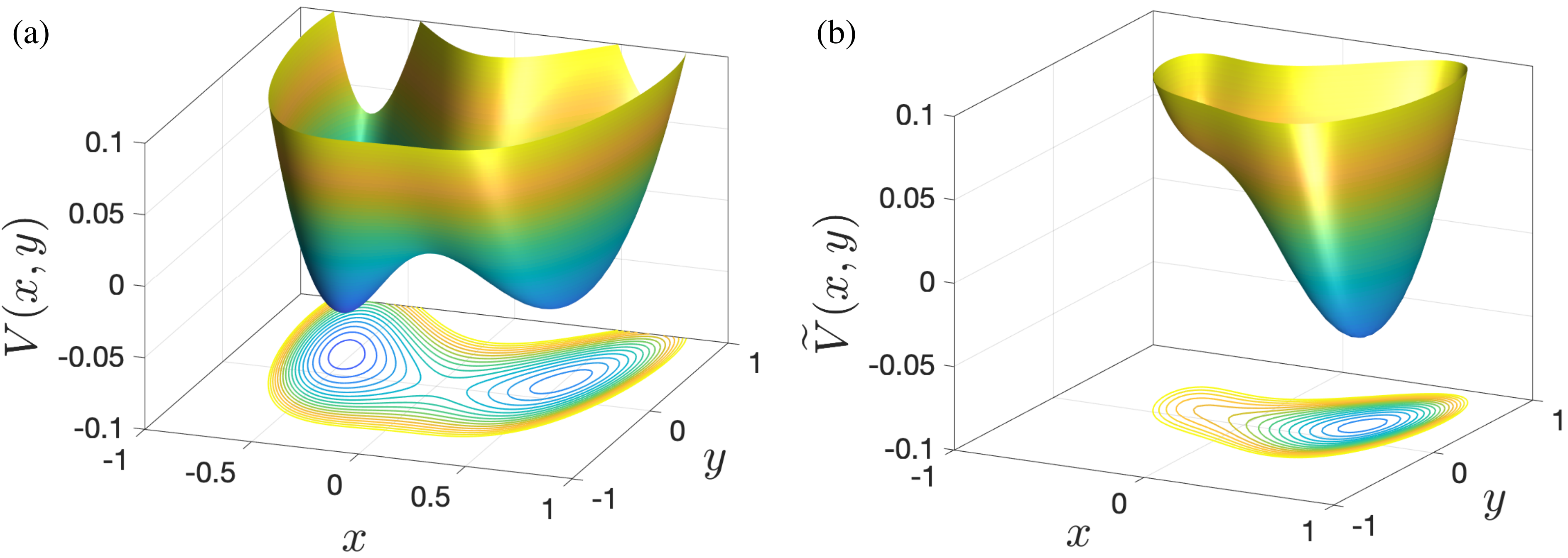}
\caption{The two-dimensional potential function.
(a) The uncontrolled potential~\eqref{eq:VueDuffing}. 
(b) The effective potential $\tilde V(\vc x)=V(\vc x)+\alpha\|\vc x-\vc x_a\|^2/2$ corresponding to the controlled system with $\tau=0.1$ and $\alpha=0.3$.}
\label{fig:ueDuffing_V}
\end{figure*}

Again the minima $\vc x_a$ and $\vc x_b$ are both stable fixed points of the deterministic
system $\dot{\vc x}=-\bnabla V(\vc x)$. Addition of noise allows for rare transitions between the 
two equilibria. Our goal is to mitigate the transitions away from the desirable equilibrium $\vc x_a$.

\begin{figure*}
\centering
\includegraphics[width=.8\textwidth]{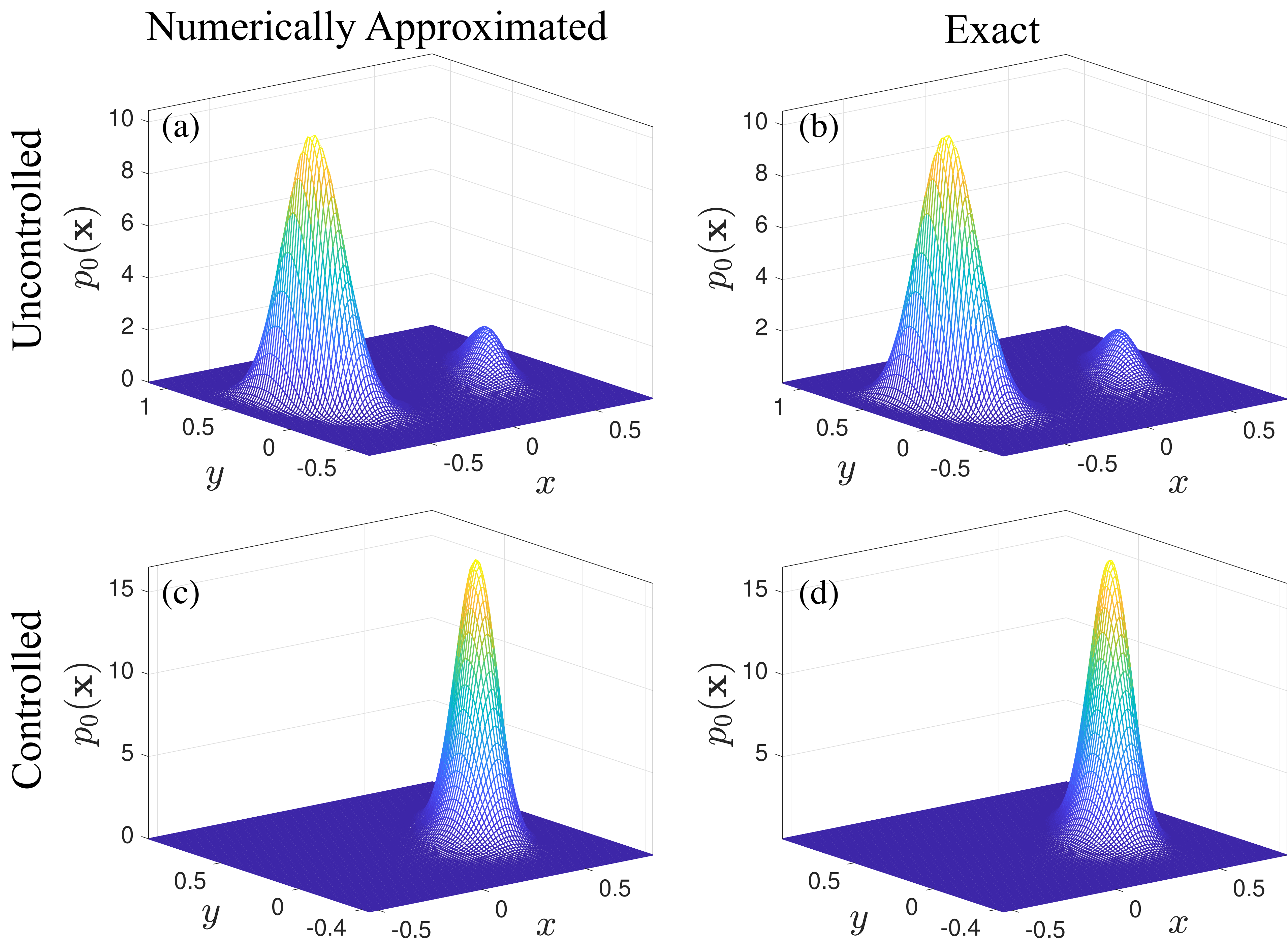}
\caption{Stationary PDF corresponding to the two-dimensional potential~\eqref{eq:VueDuffing}. 
(a) Stationary PDF of the uncontrolled system approximated from direct numerical
simulations. 
(b) Exact stationary PDF of the uncontrolled system. 
(c) Stationary PDF of the controlled SDDE~\eqref{eq:ode_cont} approximated from direct numerical
simulations. 
(d) Exact PDF of the rescaled approximating SDE~\eqref{eq:rescaled_approx_sde}.
In panels (c) and (d), we have $\tau=0.1$ and $\alpha=0.3$ for the delay and control gain, respectively.
}
\label{fig:ueDuffing_pdf}
\end{figure*}

The function $V$ is designed so that the potential is deeper at the minimum $\vc x_b$
(see figure~\ref{fig:ueDuffing_V}(a)).
As a result, the trajectories of the uncontrolled SDE~\eqref{eq:ode}
are more likely to be found around this undesirable equilibrium. 
This is confirmed by direct numerical simulations of the uncontrolled SDE~\eqref{eq:ode} 
and estimating the stationary equilibrium as shown in figure~\ref{fig:ueDuffing_pdf}(a).
For reference, the exact stationary PDF of the uncontrolled system is
shown in figure~\ref{fig:ueDuffing_pdf}(b), which is in agreement with the
numerically estimated PDF. In both cases the noise intensity is $\sigma=0.15$.

Next we consider the controlled system with delay $\tau=0.1$. As before, the delay is chosen to be small
so that the approximating SDE is a reasonable approximation of the controlled SDDE. 
Gradually increasing the control gain $\alpha$, we find that $\alpha=0.3$ is adequate for 
mitigating the transitions away from the desirable equilibrium $\vc x_a$. 
This is shown in figure~\ref{fig:ueDuffing_V}(b), which depicts the
effective potential $\tilde V(\vc x)=V(\vc x)+\alpha|\vc x-\vc x_a|^2/2$  with $\alpha=0.3$.
This effective potential has a unique minimum at the point $\vc x_a$.

Furthermore, for $\tau=0.1$ and $\alpha=0.3$, the upper bound in Corollary~\ref{cor:rc}
is approximately $1.6\times 10^{-6}$, indicating low probability of transitions towards the undesirable
equilibrium. 
We emphasize again that determining the control gain $\alpha$ can be done merely by
analyzing the function $\tilde V$, the stationary density $p_0$ and the upper bound~\eqref{eq:rc_ub_0}, 
without any numerical simulations of the controlled SDDE 
or its approximating SDE. 

Figure~\ref{fig:ueDuffing_pdf}(c-d) compares the numerically estimated 
stationary PDF of the controlled system (SDDE) and the exact stationary 
PDF of its approximating SDE. The two PDF are in good agreement. Furthermore, 
the PDFs are unimodal with almost all the density concentrated around the 
desirable equilibrium $\vc x_a$.  As a result, transition to the undesirable 
equilibrium $\vc x_b$ is very unlikely, and the mitigation has been successful. 

\section{Conclusions}\label{sec:concl}
We have demonstrated the effectiveness of delay feedback control 
for mitigation of transitions in stochastic multistable systems driven by the gradient of a potential.  
In the absence of control, small amounts of noise can lead to rare transitions
between the stable equilibria of the system. The delay feedback control
mitigates these transitions by stabilizing an arbitrary equilibrium. 

We added a delay feedback control term that turns the original stochastic differential equation (SDE) into a
stochastic delay differential equation (SDDE). 
For small delay $\tau$, we showed that this SDDE can be approximated by an SDE. 
This approximation reveals two interesting features of the control: 
(i) the delay feedback modifies the potential $V$ resulting
in an effective potential $\tilde V$ which is deeper at the location of the desirable
equilibrium $\vc x_a$. As a result, transitions away from this equilibrium become harder.
(ii) At the same time, the delay feedback intensifies the effect of noise. In particular, the 
noise intensity of the uncontrolled system $\sigma$ becomes $\sigma/\sqrt{1-\alpha\tau}$
in the controlled system, where $\alpha$ is the control gain.

Therefore, the effect of delay feedback is twofold. A stabilizing effect through deepening of the 
potential well and a destabilizing effect due to intensified noise. As a result, the delay $\tau$
and the gain $\alpha$ should be chosen appropriately in order to ensure that the 
stabilizing effect dominates the intensified noise. 

We also derived exact results for the approximating SDE that elucidate the appropriate choice of the
control delay and gain.
For a given small delay $\tau$, an appropriate 
control gain $\alpha$ can be determined by analysis of the effective potential $\tilde V$
and the stationary density $p_0$.
Since these two quantities are known analytically, the control gain can be determined 
at a low computational cost, without any need for generating sample trajectories of an
SDE (or SDDE). 
Using these results, we also derived an upper bound for the 
probability of transitions away from the desirable equilibrium
and towards an undesirable one. 
This upper bound (cf. Theorem~\ref{thm:r_c}) depends on 
the control gain $\alpha$, the delay $\tau$, the potential difference between the two equilibria
(before applying the control) and the Euclidean distance between them.

Our work can be extended in several directions. The approximating SDE 
(and its rescaled version; Theorem~\ref{thm:RASDE}) is valid for multiplicative noise, where
the diffusion tensor $\bsigma(\vc x)$ depends on space. However, our analytical results 
concerning the stationary PDF and the upper bound for transition probabilities are
limited to the case where $\sigma$ is constant. It would be attractive 
to generalize these results to the case of multiplicative noise. 

More importantly, it is desirable to design low-dimensional controllers of the form
\begin{equation}
\vc u(t-\tau)=P\left(\vc X(t-\tau)-\vc x_a\right),
\end{equation}
where $P$ is a projection onto a lower dimensional subspace.
Such controllers would be more practical for mitigating transitions in 
high-dimensional systems, since they only act on a few degrees of freedom. 
Our preliminary numerical observations (not presented here) indicate that, if
the projection is onto the least stable subspace of the fixed point $\vc x_a$, 
then the mitigation is successful. However, our analytical results do not 
immediately extend 
to this low-dimensional control. The difficulty arises from the fact 
that $P\bnabla V(\vc x)$ or $\bnabla V(P\vc x)$ cannot necessarily 
be written as gradient functions. In other words, an effective potential $\tilde V$
does not necessarily exist such that $\bnabla \tilde V(\vc x)=P\bnabla V(\vc x)$
or $\bnabla \tilde V(\vc x)=\bnabla V(P\vc x)$. A counter example is provided 
by the function $V(x,y)=xy$ and $P$ being the orthogonal projection onto the $x$ coordinate.

Finally, analytical results for the general SDE~\eqref{eq:ode_general}, 
where $\vc f(\vc X)$ is not the gradient of a potential,
would be welcome. At the moment, very little is known about the 
delay feedback control of such general stochastic systems. 
These open questions will be explored in forthcoming work. 

\section*{Acknowledgments}
I thank Gabor Stepan for fruitful discussions on delay differential equations. 
This work was partially supported by the ARO MURI Grant W911NF-17-1-0306 at MIT, while the author held a postdoc position in Prof. Sapsis' group.

\appendix
\section{Proof of Theorem~\ref{thm:RASDE}}\label{app:RASDE}
Consider $t$ as a function of the rescaled time $s$ so that $t(s) = (1-\alpha\tau) s$. Furthermore, 
we define a new stochastic process $\tilde{\mathbfcal X}(s)$ such that $\tilde{\mathbfcal X} (s) := \mathbfcal X(t(s))$. 
Noting that $\id t = (1-\alpha\tau) \id s$, Eq.~\eqref{eq:approx_sde} can be written as
\begin{align}
(1-\alpha\tau )\id{\tilde{\mathbfcal X}}(s) = & -(1-\alpha\tau)\bnabla V(\tilde{\mathbfcal X}) \id s \nonumber\\
& - (1-\alpha\tau)\alpha\left(\tilde{\mathbfcal X}-\vc x_a\right)\id s \nonumber\\
& +\bsigma(\tilde{\mathbfcal X})\id{\vc W}(t). 
\label{eq:RASDE_2}
\end{align}
Note that the Wiener process $\vc W(t)$ is in the original time $t$
and needs to be transformed to the rescaled time $s$. 
To this end, we define $\tilde{\vc W}(s):=\vc W(t)/\sqrt{1-\alpha\tau}$ and show that $\tilde{\vc W}(s)$ is
also a standard Wiener process. First note that $\mathbb P(\{\tilde{\vc W}(0)=\vc 0\})=\mathbb P\left(\{\vc W(0)=\vc 0\}\right)=1$.
Clearly, the increments $\tilde{\vc W}(s_2)-\tilde{\vc W}(s_1)$ are Gaussian and independent for all $s_1<s_2$, 
and $\mathbb E[\tilde{\vc W}(s_2)-\tilde{\vc W}(s_1)]= \mathbb E[\vc W(t_2)-\vc W(t_1)]/\sqrt{1-\alpha\tau}=\vc 0$. 
It remains to show that the variance of the increment is $s_2-s_1$. For simplicity, we show that for 
a scalar process $\tilde W(s)$. The generalization to the multidimensional case is straightforward.
\begin{align}
\mathbb E\left[|\tilde W(s_2)-\tilde W(s_1)|^2 \right] &= \frac{\mathbb E\left[|W(t(s_2))-W(t(s_1))|^2\right]}{1-\alpha\tau} 
\nonumber\\
&=\frac{t(s_2)-t(s_1)}{1-\alpha\tau} = s_2-s_1.
\end{align}
This proves that $\tilde{\vc W}(s)$ is a standard Wiener process. 

Substituting $\id \vc W(t) = \sqrt{1-\alpha\tau}\,\id \tilde{\vc{W}}(s)$ into Eq.~\eqref{eq:RASDE_2} and dividing
by $(1-\alpha\tau)$, we obtain
\begin{equation}
\id{\tilde{\mathbfcal X}}(s) = -\bnabla V(\tilde{\mathbfcal X}) \id s
- \alpha\left(\tilde{\mathbfcal X}-\vc x_a\right)\id s +
\frac{\bsigma(\tilde{\mathbfcal X})}{\sqrt{1-\alpha\tau}}\id\tilde{\vc W}(s). 
\end{equation}
Finally, defining $\tilde \bsigma (\vc x)=\bsigma (\vc x)/\sqrt{1-\alpha\tau}$
and omitting the tilde signs from $\tilde{\mathbfcal X}$ and $\tilde{\vc W}$, 
we obtain the desired Eq.~\eqref{eq:rescaled_approx_sde}.

\section{Proof of Theorem~\ref{thm:r_c}}\label{app:r_c}
Recall that $\Pr\left( \mathbfcal X\in\mathcal S\right) =
\int_{\mathcal S}p_0(\vc x)\id \vc x,$ for any Lebesgue-measurable set $\mathcal S$.  
Using Theorem~\ref{thm:EU}, we have
\begin{equation}
r_c = \frac{
	\int_{\mathcal B_\epsilon(\vc x_b)}e^{-V(\vc x)/\tilde\nu}e^{- \frac{\alpha}{2\tilde \nu}|\vc x-\vc x_a|^2}\id \vc x
}{
	\int_{\mathcal B_\epsilon(\vc x_a)}e^{-V(\vc x)/\tilde\nu}e^{- \frac{\alpha}{2\tilde \nu}|\vc x-\vc x_a|^2}\id \vc x	
}.
\end{equation}
Using the estimates,
$$e^{-\frac{V(\vc x)}{\tilde\nu}}\leq e^{-\frac{\min_{\vc x\in \mathcal B_\epsilon(\vc x_b)} V(\vc x)}{\tilde \nu}},\quad \forall \vc x\in \mathcal B_\epsilon(\vc x_b),$$
$$e^{-\frac{V(\vc x)}{\tilde\nu}}\geq e^{-\frac{\max_{\vc x\in \mathcal B_\epsilon(\vc x_a)} V(\vc x)}{\tilde \nu}},\quad \forall \vc x\in \mathcal B_\epsilon(\vc x_a),$$
we obtain
\begin{equation}
r_c \leq \exp\left[ \frac{\delta V(\epsilon,\vc x_a,\vc x_b)}{\tilde \nu}\right]
\frac{
	\int_{\mathcal B_\epsilon(\vc x_b)}e^{- \frac{|\vc x-\vc x_a|^2}{\beta^2}}\id \vc x
}{
	\int_{\mathcal B_\epsilon(\vc x_a)}e^{- \frac{|\vc x-\vc x_a|^2}{\beta^2}}\id \vc x	
},
\end{equation}
where we used the fact that $\beta^2 = 2\tilde\nu/\alpha$.
Next, we observe that 
$$e^{- \frac{|\vc x-\vc x_a|^2}{\beta^2}}\leq e^{- \frac{\min_{\vc x\in \mathcal B_\epsilon(\vc x_b)}|\vc x-\vc x_a|^2}{\beta^2}},
\quad \forall \vc x\in\mathcal B_\epsilon(\vc x_b)$$
$$e^{- \frac{|\vc x-\vc x_a|^2}{\beta^2}}\geq e^{- \frac{\max_{\vc x\in \mathcal B_\epsilon(\vc x_a)}|\vc x-\vc x_a|^2}{\beta^2}}
=e^{-\frac{\epsilon^2}{\beta^2}},\quad \forall \vc x\in\mathcal B_\epsilon(\vc x_a).$$
As a result, 
\begin{equation}
r_c \leq \exp\left[ \frac{\delta V(\epsilon,\vc x_a,\vc x_b)}{\tilde \nu}\right]
\frac{
	e^{- \frac{\min_{\vc x\in \mathcal B_\epsilon(\vc x_b)}|\vc x-\vc x_a|^2}{\beta^2}}\mbox{Vol}\left(\mathcal B_\epsilon(\vc x_b)\right)
}{
	e^{-\frac{\epsilon^2}{\beta^2}}\,\mbox{Vol}\left(\mathcal B_\epsilon(\vc x_b)\right)
},
\end{equation}
where $\mbox{Vol}(\mathcal B_\epsilon(\vc x_b)) = \frac{(\epsilon\sqrt{\pi})^n}{\Gamma(\frac{n}{2}+1)}$
denotes the volume of the ball $\mathcal B_\epsilon(\vc x_b)\subset \mathbb R^n$.
This completes the proof.

\section{Numerical integration of SDDEs}\label{app:num_sdde}
Numerical integration of SDDEs in Section~\ref{sec:numerics}
are carried out by the predictor-corrector scheme developed by 
Cao et al.~\cite{cao2015}. For completeness, we present their method here. 
This numerical scheme applies to SDDEs of the form
\begin{equation}
\id\vc X(t) = \vc f(\vc X(t),\vc X(t-\tau))\id t + \bsigma(\vc X(t))\id \vc W(t),
\label{eq:sdde_cao}
\end{equation}
where $\vc f:\mathbb R^n\to \mathbb R^n$, $\vc W(t)\in\mathbb R^m$ 
is a standard multidimensional Wiener process, and $\bsigma(\vc X) \in\mathbb R^{n\times m}$ 
is the diffusion matrix. For the smoothness requirements of the maps $\vc f$
and $\bsigma$ refer to Section 2 of Cao et al.~\cite{cao2015}.
In the context of our paper, we have
\begin{equation}
\vc f(\vc X(t),\vc X(t-\tau)) = -\bnabla V(\vc X(t)) -\alpha \left( \vc X(t-\tau)-\vc x_a\right). 
\end{equation}

Consider a small time step $h>0$, and let $\vc X^i$ denote the numerical approximation of $\vc X(ih)$, i.e., 
$\vc X^i \simeq \vc X(ih)$. Furthermore, we assume that the delay $\tau$ is an integer multiple of the
time step $h$, such that $\tau = j h$ for some $j\in\mathbb N\cup \{0\}$. 
We denote the elements of the vector $\vc W(t)$ by $W_{\ell}(t)$, 
and write $W_\ell^i$ for $W_\ell(ih)$.
Similarly, the columns of 
$\bsigma$ are denoted by $\bsigma_{\ell}$, 
\begin{equation}
\bsigma = [\bsigma_1|\bsigma_2|\cdots|\bsigma_m]. 
\end{equation}

In this set-up, the predictor-corrector method can be written as the two-step scheme,
\begin{subequations}
\begin{equation}
\overline{\vc X}^{i+1} = \vc X^i + h\vc f\left(\vc X^i, \vc X^{i-j}\right)
+\sum_{\ell=1}^{m} \bsigma_\ell(\vc X^i)\Delta W_\ell^i,
\end{equation}
\begin{align}
\vc X^{i+1} = & \vc X^i + \frac{h}{2}\left[\vc f\left(\vc X^i, \vc X^{i-j}\right)
+\vc f\left(\overline{\vc X}^{i+1}, \vc X^{i-j+1}\right)\right]\nonumber\\
& +\frac12\sum_{\ell=1}^{m} \left[\bsigma_\ell(\vc X^i)+\bsigma_\ell(\overline{\vc X}^{i+1})\right]\Delta W_\ell^i,
\end{align}
\label{eq:cao}
\end{subequations}
where $\Delta W_\ell^i$ denote the increments of the Wiener process, 
$\Delta W_\ell^i = W_\ell^{i+1}-W_\ell^i$. 

If the maps $\vc f$ and $\bsigma$ are smooth enough, 
the above scheme has a global mean-square convergence rate of $\mathcal O(h)$
(see Theorem 2.2 in Ref.~\onlinecite{cao2015}). The numerical integrations in Section~\ref{sec:numerics}
are carried out using the scheme~\eqref{eq:cao} with $h=0.01$.


\begin{thebibliography}{38}%
	\makeatletter
	\providecommand \@ifxundefined [1]{%
		\@ifx{#1\undefined}
	}%
	\providecommand \@ifnum [1]{%
		\ifnum #1\expandafter \@firstoftwo
		\else \expandafter \@secondoftwo
		\fi
	}%
	\providecommand \@ifx [1]{%
		\ifx #1\expandafter \@firstoftwo
		\else \expandafter \@secondoftwo
		\fi
	}%
	\providecommand \natexlab [1]{#1}%
	\providecommand \enquote  [1]{``#1''}%
	\providecommand \bibnamefont  [1]{#1}%
	\providecommand \bibfnamefont [1]{#1}%
	\providecommand \citenamefont [1]{#1}%
	\providecommand \href@noop [0]{\@secondoftwo}%
	\providecommand \href [0]{\begingroup \@sanitize@url \@href}%
	\providecommand \@href[1]{\@@startlink{#1}\@@href}%
	\providecommand \@@href[1]{\endgroup#1\@@endlink}%
	\providecommand \@sanitize@url [0]{\catcode `\\12\catcode `\$12\catcode
		`\&12\catcode `\#12\catcode `\^12\catcode `\_12\catcode `\%12\relax}%
	\providecommand \@@startlink[1]{}%
	\providecommand \@@endlink[0]{}%
	\providecommand \url  [0]{\begingroup\@sanitize@url \@url }%
	\providecommand \@url [1]{\endgroup\@href {#1}{\urlprefix }}%
	\providecommand \urlprefix  [0]{URL }%
	\providecommand \Eprint [0]{\href }%
	\providecommand \doibase [0]{http://dx.doi.org/}%
	\providecommand \selectlanguage [0]{\@gobble}%
	\providecommand \bibinfo  [0]{\@secondoftwo}%
	\providecommand \bibfield  [0]{\@secondoftwo}%
	\providecommand \translation [1]{[#1]}%
	\providecommand \BibitemOpen [0]{}%
	\providecommand \bibitemStop [0]{}%
	\providecommand \bibitemNoStop [0]{.\EOS\space}%
	\providecommand \EOS [0]{\spacefactor3000\relax}%
	\providecommand \BibitemShut  [1]{\csname bibitem#1\endcsname}%
	\let\auto@bib@innerbib\@empty
	\bibitem [{\citenamefont {Lenton}(2011)}]{lenton2011}%
	\BibitemOpen
	\bibfield  {author} {\bibinfo {author} {\bibfnamefont {T.~M.}\ \bibnamefont
			{Lenton}},\ }\bibfield  {title} {\enquote {\bibinfo {title} {Early warning of
				climate tipping points},}\ }\href {\doibase 10.1038/nclimate1143} {\bibfield
		{journal} {\bibinfo  {journal} {Nature Climate Change}\ }\textbf {\bibinfo
			{volume} {1}},\ \bibinfo {pages} {201} (\bibinfo {year} {2011})}\BibitemShut
	{NoStop}%
	\bibitem [{\citenamefont {Dijkstra}\ \emph {et~al.}(2019)\citenamefont
		{Dijkstra}, \citenamefont {Hernández-García}, \citenamefont {Masoller},\
		and\ \citenamefont {Barreiro}}]{dijkstra2019}%
	\BibitemOpen
	\bibfield  {author} {\bibinfo {author} {\bibfnamefont {H.~A.}\ \bibnamefont
			{Dijkstra}}, \bibinfo {author} {\bibfnamefont {E.}~\bibnamefont
			{Hernández-García}}, \bibinfo {author} {\bibfnamefont {C.}~\bibnamefont
			{Masoller}}, \ and\ \bibinfo {author} {\bibfnamefont {M.}~\bibnamefont
			{Barreiro}},\ }\href {\doibase 10.1017/9781316275757} {\emph {\bibinfo
			{title} {Networks in Climate}}}\ (\bibinfo  {publisher} {Cambridge University
		Press},\ \bibinfo {year} {2019})\BibitemShut {NoStop}%
	\bibitem [{\citenamefont {McSharry}, \citenamefont {Smith},\ and\ \citenamefont
		{Tarassenko}(2003)}]{McSharry2003}%
	\BibitemOpen
	\bibfield  {author} {\bibinfo {author} {\bibfnamefont {P.~E.}\ \bibnamefont
			{McSharry}}, \bibinfo {author} {\bibfnamefont {L.~A.}\ \bibnamefont {Smith}},
		\ and\ \bibinfo {author} {\bibfnamefont {L.}~\bibnamefont {Tarassenko}},\
	}\bibfield  {title} {\enquote {\bibinfo {title} {Prediction of epileptic
				seizures: are nonlinear methods relevant?}}\ }\href {\doibase
		10.1038/nm0303-241} {\bibfield  {journal} {\bibinfo  {journal} {Nature
				medicine}\ }\textbf {\bibinfo {volume} {9}},\ \bibinfo {pages} {241}
		(\bibinfo {year} {2003})}\BibitemShut {NoStop}%
	\bibitem [{\citenamefont {Meisel}\ \emph {et~al.}(2015)\citenamefont {Meisel},
		\citenamefont {Klaus}, \citenamefont {Kuehn},\ and\ \citenamefont
		{Plenz}}]{meisel2015}%
	\BibitemOpen
	\bibfield  {author} {\bibinfo {author} {\bibfnamefont {C.}~\bibnamefont
			{Meisel}}, \bibinfo {author} {\bibfnamefont {A.}~\bibnamefont {Klaus}},
		\bibinfo {author} {\bibfnamefont {C.}~\bibnamefont {Kuehn}}, \ and\ \bibinfo
		{author} {\bibfnamefont {D.}~\bibnamefont {Plenz}},\ }\bibfield  {title}
	{\enquote {\bibinfo {title} {Critical slowing down governs the transition to
				neuron spiking},}\ }\href {\doibase 10.1371/journal.pcbi.1004097} {\bibfield
		{journal} {\bibinfo  {journal} {PLoS computational biology}\ }\textbf
		{\bibinfo {volume} {11}},\ \bibinfo {pages} {e1004097} (\bibinfo {year}
		{2015})}\BibitemShut {NoStop}%
	\bibitem [{\citenamefont {Wilkat}, \citenamefont {Rings},\ and\ \citenamefont
		{Lehnertz}(2019)}]{Wilkat2019}%
	\BibitemOpen
	\bibfield  {author} {\bibinfo {author} {\bibfnamefont {T.}~\bibnamefont
			{Wilkat}}, \bibinfo {author} {\bibfnamefont {T.}~\bibnamefont {Rings}}, \
		and\ \bibinfo {author} {\bibfnamefont {K.}~\bibnamefont {Lehnertz}},\
	}\bibfield  {title} {\enquote {\bibinfo {title} {No evidence for critical
				slowing down prior to human epileptic seizures},}\ }\href {\doibase
		10.1063/1.5122759} {\bibfield  {journal} {\bibinfo  {journal} {Chaos}\
		}\textbf {\bibinfo {volume} {29}},\ \bibinfo {pages} {091104} (\bibinfo
		{year} {2019})}\BibitemShut {NoStop}%
	\bibitem [{\citenamefont {Drake}\ and\ \citenamefont
		{Griffen}(2010)}]{drake2010}%
	\BibitemOpen
	\bibfield  {author} {\bibinfo {author} {\bibfnamefont {J.~M.}\ \bibnamefont
			{Drake}}\ and\ \bibinfo {author} {\bibfnamefont {B.~D.}\ \bibnamefont
			{Griffen}},\ }\bibfield  {title} {\enquote {\bibinfo {title} {Early warning
				signals of extinction in deteriorating environments},}\ }\href {\doibase
		10.1038/nature09389} {\bibfield  {journal} {\bibinfo  {journal} {Nature}\
		}\textbf {\bibinfo {volume} {467}},\ \bibinfo {pages} {456} (\bibinfo {year}
		{2010})}\BibitemShut {NoStop}%
	\bibitem [{\citenamefont {Dai}\ \emph {et~al.}(2012)\citenamefont {Dai},
		\citenamefont {Vorselen}, \citenamefont {Korolev},\ and\ \citenamefont
		{Gore}}]{Dai2012}%
	\BibitemOpen
	\bibfield  {author} {\bibinfo {author} {\bibfnamefont {L.}~\bibnamefont
			{Dai}}, \bibinfo {author} {\bibfnamefont {D.}~\bibnamefont {Vorselen}},
		\bibinfo {author} {\bibfnamefont {K.~S.}\ \bibnamefont {Korolev}}, \ and\
		\bibinfo {author} {\bibfnamefont {J.}~\bibnamefont {Gore}},\ }\bibfield
	{title} {\enquote {\bibinfo {title} {Generic indicators for loss of
				resilience before a tipping point leading to population collapse},}\ }\href
	{\doibase 10.1126/science.1219805} {\bibfield  {journal} {\bibinfo  {journal}
			{Science}\ }\textbf {\bibinfo {volume} {336}},\ \bibinfo {pages} {1175--1177}
		(\bibinfo {year} {2012})}\BibitemShut {NoStop}%
	\bibitem [{\citenamefont {Botero}\ \emph {et~al.}(2015)\citenamefont {Botero},
		\citenamefont {Weissing}, \citenamefont {Wright},\ and\ \citenamefont
		{Rubenstein}}]{Botero2015}%
	\BibitemOpen
	\bibfield  {author} {\bibinfo {author} {\bibfnamefont {C.~A.}\ \bibnamefont
			{Botero}}, \bibinfo {author} {\bibfnamefont {F.~J.}\ \bibnamefont
			{Weissing}}, \bibinfo {author} {\bibfnamefont {J.}~\bibnamefont {Wright}}, \
		and\ \bibinfo {author} {\bibfnamefont {D.~R.}\ \bibnamefont {Rubenstein}},\
	}\bibfield  {title} {\enquote {\bibinfo {title} {Evolutionary tipping points
				in the capacity to adapt to environmental change},}\ }\href {\doibase
		10.1073/pnas.1408589111} {\bibfield  {journal} {\bibinfo  {journal}
			{Proceedings of the National Academy of Sciences}\ }\textbf {\bibinfo
			{volume} {112}},\ \bibinfo {pages} {184--189} (\bibinfo {year}
		{2015})}\BibitemShut {NoStop}%
	\bibitem [{\citenamefont {Farazmand}\ and\ \citenamefont
		{Sapsis}(2017)}]{Farazmande1701533}%
	\BibitemOpen
	\bibfield  {author} {\bibinfo {author} {\bibfnamefont {M.}~\bibnamefont
			{Farazmand}}\ and\ \bibinfo {author} {\bibfnamefont {T.~P.}\ \bibnamefont
			{Sapsis}},\ }\bibfield  {title} {\enquote {\bibinfo {title} {A variational
				approach to probing extreme events in turbulent dynamical systems},}\ }\href
	{\doibase 10.1126/sciadv.1701533} {\bibfield  {journal} {\bibinfo  {journal}
			{Science Advances}\ }\textbf {\bibinfo {volume} {3}},\ \bibinfo {pages}
		{e1701533} (\bibinfo {year} {2017})}\BibitemShut {NoStop}%
	\bibitem [{\citenamefont {Blonigan}, \citenamefont {Farazmand},\ and\
		\citenamefont {Sapsis}(2019)}]{blonigan2019}%
	\BibitemOpen
	\bibfield  {author} {\bibinfo {author} {\bibfnamefont {P.~J.}\ \bibnamefont
			{Blonigan}}, \bibinfo {author} {\bibfnamefont {M.}~\bibnamefont {Farazmand}},
		\ and\ \bibinfo {author} {\bibfnamefont {T.~P.}\ \bibnamefont {Sapsis}},\
	}\bibfield  {title} {\enquote {\bibinfo {title} {Are extreme dissipation
				events predictable in turbulent fluid flows?}}\ }\href {\doibase
		10.1103/PhysRevFluids.4.044606} {\bibfield  {journal} {\bibinfo  {journal}
			{Phys. Rev. Fluids}\ }\textbf {\bibinfo {volume} {4}},\ \bibinfo {pages}
		{044606} (\bibinfo {year} {2019})}\BibitemShut {NoStop}%
	\bibitem [{\citenamefont {Longin}(1996)}]{Longin1996}%
	\BibitemOpen
	\bibfield  {author} {\bibinfo {author} {\bibfnamefont {F.~M.}\ \bibnamefont
			{Longin}},\ }\bibfield  {title} {\enquote {\bibinfo {title} {The asymptotic
				distribution of extreme stock market returns},}\ }\href {\doibase
		10.1086/209695} {\bibfield  {journal} {\bibinfo  {journal} {The Journal of
				Business}\ }\textbf {\bibinfo {volume} {69}},\ \bibinfo {pages} {383--408}
		(\bibinfo {year} {1996})}\BibitemShut {NoStop}%
	\bibitem [{\citenamefont {Sornette}\ and\ \citenamefont
		{Ouillon}(2012)}]{sornette2012}%
	\BibitemOpen
	\bibfield  {author} {\bibinfo {author} {\bibfnamefont {D.}~\bibnamefont
			{Sornette}}\ and\ \bibinfo {author} {\bibfnamefont {G.}~\bibnamefont
			{Ouillon}},\ }\bibfield  {title} {\enquote {\bibinfo {title} {Dragon-kings:
				mechanisms, statistical methods and empirical evidence},}\ }\href {\doibase
		10.1140/epjst/e2012-01559-5} {\bibfield  {journal} {\bibinfo  {journal} {The
				European Physical Journal Special Topics}\ }\textbf {\bibinfo {volume}
			{205}},\ \bibinfo {pages} {1--26} (\bibinfo {year} {2012})}\BibitemShut
	{NoStop}%
	\bibitem [{\citenamefont {Jurczyk}\ \emph {et~al.}(2017)\citenamefont
		{Jurczyk}, \citenamefont {Rehberg}, \citenamefont {Eckrot},\ and\
		\citenamefont {Morgenstern}}]{jurczyk2017}%
	\BibitemOpen
	\bibfield  {author} {\bibinfo {author} {\bibfnamefont {J.}~\bibnamefont
			{Jurczyk}}, \bibinfo {author} {\bibfnamefont {T.}~\bibnamefont {Rehberg}},
		\bibinfo {author} {\bibfnamefont {A.}~\bibnamefont {Eckrot}}, \ and\ \bibinfo
		{author} {\bibfnamefont {I.}~\bibnamefont {Morgenstern}},\ }\bibfield
	{title} {\enquote {\bibinfo {title} {Measuring critical transitions in
				financial markets},}\ }\href {\doibase 10.1038/s41598-017-11854-1} {\bibfield
		{journal} {\bibinfo  {journal} {Scientific reports}\ }\textbf {\bibinfo
			{volume} {7}},\ \bibinfo {pages} {11564} (\bibinfo {year}
		{2017})}\BibitemShut {NoStop}%
	\bibitem [{\citenamefont {Dakos}\ \emph {et~al.}(2008)\citenamefont {Dakos},
		\citenamefont {Scheffer}, \citenamefont {van Nes}, \citenamefont {Brovkin},
		\citenamefont {Petoukhov},\ and\ \citenamefont {Held}}]{scheffer2008}%
	\BibitemOpen
	\bibfield  {author} {\bibinfo {author} {\bibfnamefont {V.}~\bibnamefont
			{Dakos}}, \bibinfo {author} {\bibfnamefont {M.}~\bibnamefont {Scheffer}},
		\bibinfo {author} {\bibfnamefont {E.~H.}\ \bibnamefont {van Nes}}, \bibinfo
		{author} {\bibfnamefont {V.}~\bibnamefont {Brovkin}}, \bibinfo {author}
		{\bibfnamefont {V.}~\bibnamefont {Petoukhov}}, \ and\ \bibinfo {author}
		{\bibfnamefont {H.}~\bibnamefont {Held}},\ }\bibfield  {title} {\enquote
		{\bibinfo {title} {Slowing down as an early warning signal for abrupt climate
				change},}\ }\href {\doibase 10.1073/pnas.0802430105} {\bibfield  {journal}
		{\bibinfo  {journal} {Proceedings of the National Academy of Sciences}\
		}\textbf {\bibinfo {volume} {105}},\ \bibinfo {pages} {14308--14312}
		(\bibinfo {year} {2008})}\BibitemShut {NoStop}%
	\bibitem [{\citenamefont {Scheffer}\ \emph {et~al.}(2009)\citenamefont
		{Scheffer}, \citenamefont {Bascompte}, \citenamefont {Brock}, \citenamefont
		{Brovkin}, \citenamefont {Carpenter}, \citenamefont {Dakos}, \citenamefont
		{Held}, \citenamefont {Van~Nes}, \citenamefont {Rietkerk},\ and\
		\citenamefont {Sugihara}}]{scheffer2009}%
	\BibitemOpen
	\bibfield  {author} {\bibinfo {author} {\bibfnamefont {M.}~\bibnamefont
			{Scheffer}}, \bibinfo {author} {\bibfnamefont {J.}~\bibnamefont {Bascompte}},
		\bibinfo {author} {\bibfnamefont {W.~A.}\ \bibnamefont {Brock}}, \bibinfo
		{author} {\bibfnamefont {V.}~\bibnamefont {Brovkin}}, \bibinfo {author}
		{\bibfnamefont {S.~R.}\ \bibnamefont {Carpenter}}, \bibinfo {author}
		{\bibfnamefont {V.}~\bibnamefont {Dakos}}, \bibinfo {author} {\bibfnamefont
			{H.}~\bibnamefont {Held}}, \bibinfo {author} {\bibfnamefont {E.~H.}\
			\bibnamefont {Van~Nes}}, \bibinfo {author} {\bibfnamefont {M.}~\bibnamefont
			{Rietkerk}}, \ and\ \bibinfo {author} {\bibfnamefont {G.}~\bibnamefont
			{Sugihara}},\ }\bibfield  {title} {\enquote {\bibinfo {title} {Early-warning
				signals for critical transitions},}\ }\href {\doibase 10.1038/nature08227}
	{\bibfield  {journal} {\bibinfo  {journal} {Nature}\ }\textbf {\bibinfo
			{volume} {461}},\ \bibinfo {pages} {53--59} (\bibinfo {year}
		{2009})}\BibitemShut {NoStop}%
	\bibitem [{\citenamefont {Scheffer}(2010)}]{scheffer2010}%
	\BibitemOpen
	\bibfield  {author} {\bibinfo {author} {\bibfnamefont {M.}~\bibnamefont
			{Scheffer}},\ }\bibfield  {title} {\enquote {\bibinfo {title} {Complex
				systems: foreseeing tipping points},}\ }\href {\doibase 10.1038/467411a}
	{\bibfield  {journal} {\bibinfo  {journal} {Nature}\ }\textbf {\bibinfo
			{volume} {467}},\ \bibinfo {pages} {411} (\bibinfo {year}
		{2010})}\BibitemShut {NoStop}%
	\bibitem [{\citenamefont {Scheffer}\ \emph {et~al.}(2012)\citenamefont
		{Scheffer}, \citenamefont {Carpenter}, \citenamefont {Lenton}, \citenamefont
		{Bascompte}, \citenamefont {Brock}, \citenamefont {Dakos}, \citenamefont
		{van~de Koppel}, \citenamefont {van~de Leemput}, \citenamefont {Levin},
		\citenamefont {van Nes}, \citenamefont {Pascual},\ and\ \citenamefont
		{Vandermeer}}]{Scheffer2012}%
	\BibitemOpen
	\bibfield  {author} {\bibinfo {author} {\bibfnamefont {M.}~\bibnamefont
			{Scheffer}}, \bibinfo {author} {\bibfnamefont {S.~R.}\ \bibnamefont
			{Carpenter}}, \bibinfo {author} {\bibfnamefont {T.~M.}\ \bibnamefont
			{Lenton}}, \bibinfo {author} {\bibfnamefont {J.}~\bibnamefont {Bascompte}},
		\bibinfo {author} {\bibfnamefont {W.}~\bibnamefont {Brock}}, \bibinfo
		{author} {\bibfnamefont {V.}~\bibnamefont {Dakos}}, \bibinfo {author}
		{\bibfnamefont {J.}~\bibnamefont {van~de Koppel}}, \bibinfo {author}
		{\bibfnamefont {I.~A.}\ \bibnamefont {van~de Leemput}}, \bibinfo {author}
		{\bibfnamefont {S.~A.}\ \bibnamefont {Levin}}, \bibinfo {author}
		{\bibfnamefont {E.~H.}\ \bibnamefont {van Nes}}, \bibinfo {author}
		{\bibfnamefont {M.}~\bibnamefont {Pascual}}, \ and\ \bibinfo {author}
		{\bibfnamefont {J.}~\bibnamefont {Vandermeer}},\ }\bibfield  {title}
	{\enquote {\bibinfo {title} {Anticipating critical transitions},}\ }\href
	{\doibase 10.1126/science.1225244} {\bibfield  {journal} {\bibinfo  {journal}
			{Science}\ }\textbf {\bibinfo {volume} {338}},\ \bibinfo {pages} {344--348}
		(\bibinfo {year} {2012})}\BibitemShut {NoStop}%
	\bibitem [{\citenamefont {Farazmand}\ and\ \citenamefont
		{Sapsis}(2019{\natexlab{a}})}]{farazmand2019a}%
	\BibitemOpen
	\bibfield  {author} {\bibinfo {author} {\bibfnamefont {M.}~\bibnamefont
			{Farazmand}}\ and\ \bibinfo {author} {\bibfnamefont {T.~P.}\ \bibnamefont
			{Sapsis}},\ }\bibfield  {title} {\enquote {\bibinfo {title} {{Extreme
					{E}vents: {M}echanisms and {P}rediction}},}\ }\href {\doibase
		10.1115/1.4042065} {\bibfield  {journal} {\bibinfo  {journal} {Applied
				Mechanics Reviews}\ }\textbf {\bibinfo {volume} {71}} (\bibinfo {year}
		{2019}{\natexlab{a}}),\ 10.1115/1.4042065}\BibitemShut {NoStop}%
	\bibitem [{\citenamefont {Pratt}(1986)}]{pratt1986}%
	\BibitemOpen
	\bibfield  {author} {\bibinfo {author} {\bibfnamefont {L.~R.}\ \bibnamefont
			{Pratt}},\ }\bibfield  {title} {\enquote {\bibinfo {title} {A statistical
				method for identifying transition states in high dimensional problems},}\
	}\href {\doibase https://doi.org/10.1063/1.451695} {\bibfield  {journal}
		{\bibinfo  {journal} {The Journal of Chemical Physics}\ }\textbf {\bibinfo
			{volume} {85}},\ \bibinfo {pages} {5045--5048} (\bibinfo {year}
		{1986})}\BibitemShut {NoStop}%
	\bibitem [{\citenamefont {Vanden-Eijnden}(2006)}]{vanden2006}%
	\BibitemOpen
	\bibfield  {author} {\bibinfo {author} {\bibfnamefont {E.}~\bibnamefont
			{Vanden-Eijnden}},\ }\bibfield  {title} {\enquote {\bibinfo {title}
			{Transition path theory},}\ }\href {\doibase 10.1007/3-540-35273-2_13}
	{\bibfield  {journal} {\bibinfo  {journal} {Computer Simulations in Condensed
				Matter Systems: From Materials to Chemical Biology Volume 1}\ ,\ \bibinfo
			{pages} {453--493}} (\bibinfo {year} {2006})}\BibitemShut {NoStop}%
	\bibitem [{\citenamefont {Zhou}\ \emph {et~al.}(2012)\citenamefont {Zhou},
		\citenamefont {Aliyu}, \citenamefont {Aurell},\ and\ \citenamefont
		{Huang}}]{Zhou2012}%
	\BibitemOpen
	\bibfield  {author} {\bibinfo {author} {\bibfnamefont {J.~X.}\ \bibnamefont
			{Zhou}}, \bibinfo {author} {\bibfnamefont {M.~D.~S.}\ \bibnamefont {Aliyu}},
		\bibinfo {author} {\bibfnamefont {E.}~\bibnamefont {Aurell}}, \ and\ \bibinfo
		{author} {\bibfnamefont {S.}~\bibnamefont {Huang}},\ }\bibfield  {title}
	{\enquote {\bibinfo {title} {Quasi-potential landscape in complex
				multi-stable systems},}\ }\href {\doibase 10.1098/rsif.2012.0434} {\bibfield
		{journal} {\bibinfo  {journal} {Journal of The Royal Society Interface}\
		}\textbf {\bibinfo {volume} {9}},\ \bibinfo {pages} {3539--3553} (\bibinfo
		{year} {2012})}\BibitemShut {NoStop}%
	\bibitem [{\citenamefont {Ott}, \citenamefont {Grebogi},\ and\ \citenamefont
		{Yorke}(1990)}]{ott1990}%
	\BibitemOpen
	\bibfield  {author} {\bibinfo {author} {\bibfnamefont {E.}~\bibnamefont
			{Ott}}, \bibinfo {author} {\bibfnamefont {C.}~\bibnamefont {Grebogi}}, \ and\
		\bibinfo {author} {\bibfnamefont {J.~A.}\ \bibnamefont {Yorke}},\ }\bibfield
	{title} {\enquote {\bibinfo {title} {Controlling chaos},}\ }\href {\doibase
		10.1103/PhysRevLett.64.1196} {\bibfield  {journal} {\bibinfo  {journal}
			{Phys. Rev. Lett.}\ }\textbf {\bibinfo {volume} {64}},\ \bibinfo {pages}
		{1196--1199} (\bibinfo {year} {1990})}\BibitemShut {NoStop}%
	\bibitem [{\citenamefont {Pyragas}(1992)}]{pyragas1992}%
	\BibitemOpen
	\bibfield  {author} {\bibinfo {author} {\bibfnamefont {K.}~\bibnamefont
			{Pyragas}},\ }\bibfield  {title} {\enquote {\bibinfo {title} {Continuous
				control of chaos by self-controlling feedback},}\ }\href {\doibase
		10.1016/0375-9601(92)90745-8} {\bibfield  {journal} {\bibinfo  {journal}
			{Physics Letters A}\ }\textbf {\bibinfo {volume} {170}},\ \bibinfo {pages}
		{421 -- 428} (\bibinfo {year} {1992})}\BibitemShut {NoStop}%
	\bibitem [{\citenamefont {Pyragas}(1995)}]{Pyragas1995}%
	\BibitemOpen
	\bibfield  {author} {\bibinfo {author} {\bibfnamefont {K.}~\bibnamefont
			{Pyragas}},\ }\bibfield  {title} {\enquote {\bibinfo {title} {Control of
				chaos via extended delay feedback},}\ }\href {\doibase
		10.1016/0375-9601(95)00654-L} {\bibfield  {journal} {\bibinfo  {journal}
			{Physics Letters A}\ }\textbf {\bibinfo {volume} {206}},\ \bibinfo {pages}
		{323 -- 330} (\bibinfo {year} {1995})}\BibitemShut {NoStop}%
	\bibitem [{\citenamefont {Ahlborn}\ and\ \citenamefont
		{Parlitz}(2004)}]{Ahlborn2004}%
	\BibitemOpen
	\bibfield  {author} {\bibinfo {author} {\bibfnamefont {A.}~\bibnamefont
			{Ahlborn}}\ and\ \bibinfo {author} {\bibfnamefont {U.}~\bibnamefont
			{Parlitz}},\ }\bibfield  {title} {\enquote {\bibinfo {title} {Stabilizing
				unstable steady states using multiple delay feedback control},}\ }\href
	{\doibase 10.1103/PhysRevLett.93.264101} {\bibfield  {journal} {\bibinfo
			{journal} {Phys. Rev. Lett.}\ }\textbf {\bibinfo {volume} {93}},\ \bibinfo
		{pages} {264101} (\bibinfo {year} {2004})}\BibitemShut {NoStop}%
	\bibitem [{\citenamefont {Suresh}\ and\ \citenamefont
		{Chandrasekar}(2018)}]{Suresh2018}%
	\BibitemOpen
	\bibfield  {author} {\bibinfo {author} {\bibfnamefont {R.}~\bibnamefont
			{Suresh}}\ and\ \bibinfo {author} {\bibfnamefont {V.~K.}\ \bibnamefont
			{Chandrasekar}},\ }\bibfield  {title} {\enquote {\bibinfo {title} {Influence
				of time-delay feedback on extreme events in a forced {L}i\'enard system},}\
	}\href {\doibase 10.1103/PhysRevE.98.052211} {\bibfield  {journal} {\bibinfo
			{journal} {Phys. Rev. E}\ }\textbf {\bibinfo {volume} {98}},\ \bibinfo
		{pages} {052211} (\bibinfo {year} {2018})}\BibitemShut {NoStop}%
	\bibitem [{\citenamefont {Dorf}\ and\ \citenamefont {Bishop}(2011)}]{dorf2011}%
	\BibitemOpen
	\bibfield  {author} {\bibinfo {author} {\bibfnamefont {R.~C.}\ \bibnamefont
			{Dorf}}\ and\ \bibinfo {author} {\bibfnamefont {R.~H.}\ \bibnamefont
			{Bishop}},\ }\href@noop {} {\emph {\bibinfo {title} {Modern control
				systems}}},\ \bibinfo {edition} {12th}\ ed.\ (\bibinfo  {publisher}
	{Pearson},\ \bibinfo {year} {2011})\BibitemShut {NoStop}%
	\bibitem [{\citenamefont {Farazmand}\ and\ \citenamefont
		{Sapsis}(2019{\natexlab{b}})}]{farazmand2019b}%
	\BibitemOpen
	\bibfield  {author} {\bibinfo {author} {\bibfnamefont {M.}~\bibnamefont
			{Farazmand}}\ and\ \bibinfo {author} {\bibfnamefont {T.~P.}\ \bibnamefont
			{Sapsis}},\ }\bibfield  {title} {\enquote {\bibinfo {title} {Closed-loop
				adaptive control of extreme events in a turbulent flow},}\ }\href {\doibase
		10.1103/PhysRevE.100.033110} {\bibfield  {journal} {\bibinfo  {journal}
			{Phys. Rev. E}\ }\textbf {\bibinfo {volume} {100}},\ \bibinfo {pages}
		{033110} (\bibinfo {year} {2019}{\natexlab{b}})}\BibitemShut {NoStop}%
	\bibitem [{\citenamefont {Guillouzic}, \citenamefont {{L'H}eureux},\ and\
		\citenamefont {Longtin}(1999)}]{guillouzic1999}%
	\BibitemOpen
	\bibfield  {author} {\bibinfo {author} {\bibfnamefont {S.}~\bibnamefont
			{Guillouzic}}, \bibinfo {author} {\bibfnamefont {I.}~\bibnamefont
			{{L'H}eureux}}, \ and\ \bibinfo {author} {\bibfnamefont {A.}~\bibnamefont
			{Longtin}},\ }\bibfield  {title} {\enquote {\bibinfo {title} {Small delay
				approximation of stochastic delay differential equations},}\ }\href {\doibase
		10.1103/PhysRevE.59.3970} {\bibfield  {journal} {\bibinfo  {journal} {Phys.
				Rev. E}\ }\textbf {\bibinfo {volume} {59}},\ \bibinfo {pages} {3970--3982}
		(\bibinfo {year} {1999})}\BibitemShut {NoStop}%
	\bibitem [{\citenamefont {Frank}(2005)}]{Frank2005}%
	\BibitemOpen
	\bibfield  {author} {\bibinfo {author} {\bibfnamefont {T.~D.}\ \bibnamefont
			{Frank}},\ }\bibfield  {title} {\enquote {\bibinfo {title} {Delay
				{F}okker--{P}lanck equations, {N}ovikov's theorem, and {B}oltzmann
				distributions as small delay approximations},}\ }\href {\doibase
		10.1103/PhysRevE.72.011112} {\bibfield  {journal} {\bibinfo  {journal} {Phys.
				Rev. E}\ }\textbf {\bibinfo {volume} {72}},\ \bibinfo {pages} {011112}
		(\bibinfo {year} {2005})}\BibitemShut {NoStop}%
	\bibitem [{\citenamefont {Mazanov}\ and\ \citenamefont
		{Tognetti}(1974)}]{Mazanov1974}%
	\BibitemOpen
	\bibfield  {author} {\bibinfo {author} {\bibfnamefont {A.}~\bibnamefont
			{Mazanov}}\ and\ \bibinfo {author} {\bibfnamefont {K.~P.}\ \bibnamefont
			{Tognetti}},\ }\bibfield  {title} {\enquote {\bibinfo {title} {Taylor series
				expansion of delay differential equations -- {A} warning},}\ }\href {\doibase
		10.1016/0022-5193(74)90152-0} {\bibfield  {journal} {\bibinfo  {journal}
			{Journal of Theoretical Biology}\ }\textbf {\bibinfo {volume} {46}},\
		\bibinfo {pages} {271 -- 282} (\bibinfo {year} {1974})}\BibitemShut {NoStop}%
	\bibitem [{\citenamefont {Insperger}(2015)}]{Insperger2015}%
	\BibitemOpen
	\bibfield  {author} {\bibinfo {author} {\bibfnamefont {T.}~\bibnamefont
			{Insperger}},\ }\bibfield  {title} {\enquote {\bibinfo {title} {{On the
					Approximation of Delayed Systems by {T}aylor Series Expansion}},}\ }\href
	{\doibase 10.1115/1.4027180} {\bibfield  {journal} {\bibinfo  {journal}
			{Journal of Computational and Nonlinear Dynamics}\ }\textbf {\bibinfo
			{volume} {10}} (\bibinfo {year} {2015}),\ 10.1115/1.4027180}\BibitemShut
	{NoStop}%
	\bibitem [{\citenamefont {Lasota}\ and\ \citenamefont
		{Mackey}(1994)}]{mackey1994chaos}%
	\BibitemOpen
	\bibfield  {author} {\bibinfo {author} {\bibfnamefont {A.}~\bibnamefont
			{Lasota}}\ and\ \bibinfo {author} {\bibfnamefont {M.~C.}\ \bibnamefont
			{Mackey}},\ }\href@noop {} {\emph {\bibinfo {title} {Chaos, fractals, and
				noise: {S}tochastic aspects of dynamics}}},\ \bibinfo {edition} {2nd}\ ed.,\
	\bibinfo {series} {Applied Mathematical Sciences}, Vol.~\bibinfo {volume}
	{97}\ (\bibinfo  {publisher} {Springer},\ \bibinfo {year} {1994})\ \bibinfo
	{note} {new York}\BibitemShut {NoStop}%
	\bibitem [{\citenamefont {Ambrosio}, \citenamefont {Savar{\'{e}}},\ and\
		\citenamefont {Zambotti}(2009)}]{Ambrosio2008}%
	\BibitemOpen
	\bibfield  {author} {\bibinfo {author} {\bibfnamefont {L.}~\bibnamefont
			{Ambrosio}}, \bibinfo {author} {\bibfnamefont {G.}~\bibnamefont
			{Savar{\'{e}}}}, \ and\ \bibinfo {author} {\bibfnamefont {L.}~\bibnamefont
			{Zambotti}},\ }\bibfield  {title} {\enquote {\bibinfo {title} {Existence and
				stability for {F}okker{\textendash}{P}lanck equations with log-concave
				reference measure},}\ }\href {\doibase 10.1007/s00440-008-0177-3} {\bibfield
		{journal} {\bibinfo  {journal} {Probability Theory and Related Fields}\
		}\textbf {\bibinfo {volume} {145}},\ \bibinfo {pages} {517--564} (\bibinfo
		{year} {2009})}\BibitemShut {NoStop}%
	\bibitem [{\citenamefont {Matkowsky}\ and\ \citenamefont
		{Schuss}(1981)}]{matkowsky1981}%
	\BibitemOpen
	\bibfield  {author} {\bibinfo {author} {\bibfnamefont {B.~J.}\ \bibnamefont
			{Matkowsky}}\ and\ \bibinfo {author} {\bibfnamefont {Z.}~\bibnamefont
			{Schuss}},\ }\bibfield  {title} {\enquote {\bibinfo {title} {Eigenvalues of
				the {F}okker--{P}lanck operator and the approach to equilibrium for
				diffusions in potential fields},}\ }\href {\doibase 10.1137/0140020}
	{\bibfield  {journal} {\bibinfo  {journal} {SIAM Journal on Applied
				Mathematics}\ }\textbf {\bibinfo {volume} {40}},\ \bibinfo {pages} {242--254}
		(\bibinfo {year} {1981})}\BibitemShut {NoStop}%
	\bibitem [{\citenamefont {Risken}(1989)}]{Risken1989}%
	\BibitemOpen
	\bibfield  {author} {\bibinfo {author} {\bibfnamefont {H.}~\bibnamefont
			{Risken}},\ }\href {\doibase 10.1007/978-3-642-61544-3} {\emph {\bibinfo
			{title} {The {F}okker-{P}lanck Equation}}}\ (\bibinfo  {publisher} {Springer
		Berlin Heidelberg},\ \bibinfo {year} {1989})\BibitemShut {NoStop}%
	\bibitem [{\citenamefont {Cao}, \citenamefont {Zhang},\ and\ \citenamefont
		{Karniadakis}(2015)}]{cao2015}%
	\BibitemOpen
	\bibfield  {author} {\bibinfo {author} {\bibfnamefont {W.}~\bibnamefont
			{Cao}}, \bibinfo {author} {\bibfnamefont {Z.}~\bibnamefont {Zhang}}, \ and\
		\bibinfo {author} {\bibfnamefont {G.}~\bibnamefont {Karniadakis}},\
	}\bibfield  {title} {\enquote {\bibinfo {title} {Numerical methods for
				stochastic delay differential equations via the {W}ong--{Z}akai
				approximation},}\ }\href {\doibase 10.1137/130942024} {\bibfield  {journal}
		{\bibinfo  {journal} {SIAM Journal on Scientific Computing}\ }\textbf
		{\bibinfo {volume} {37}},\ \bibinfo {pages} {A295--A318} (\bibinfo {year}
		{2015})}\BibitemShut {NoStop}%
	\bibitem [{\citenamefont {Blaise}, \citenamefont {Kalmykov},\ and\
		\citenamefont {Velcescu}(2012)}]{blaise2012}%
	\BibitemOpen
	\bibfield  {author} {\bibinfo {author} {\bibfnamefont {P.}~\bibnamefont
			{Blaise}}, \bibinfo {author} {\bibfnamefont {Y.~P.}\ \bibnamefont
			{Kalmykov}}, \ and\ \bibinfo {author} {\bibfnamefont {A.~A.}\ \bibnamefont
			{Velcescu}},\ }\bibfield  {title} {\enquote {\bibinfo {title} {Extended
				diffusion in a double well potential: {T}ransition from classical to quantum
				regime},}\ }\href {\doibase 10.1063/1.4748145} {\bibfield  {journal}
		{\bibinfo  {journal} {The Journal of Chemical Physics}\ }\textbf {\bibinfo
			{volume} {137}},\ \bibinfo {pages} {094105} (\bibinfo {year}
		{2012})}\BibitemShut {NoStop}%
\end{thebibliography}
%

\end{document}